\documentclass{article}
\usepackage[utf8]{inputenc}
\usepackage[margin=1.25in]{geometry}
\usepackage{amsmath, amssymb, amsthm}
\usepackage{tikz-cd}
\usepackage[all]{xy}

\newcommand{\id}{\mathrm{id}}

\usepackage{graphicx}
\usepackage[all]{xy}
\usepackage{enumerate}

\numberwithin{equation}{section}

\theoremstyle{plain}
\newtheorem{theorem}{Theorem}[section]
\newtheorem{lemma}[theorem]{Lemma}
\newtheorem{proposition}[theorem]{Proposition}
\newtheorem{corollary}[theorem]{Corollary}

\theoremstyle{definition}
\newtheorem{definition}[theorem]{Definition}
\newtheorem{notation}[theorem]{Notation}

\newtheorem{remark}[theorem]{Remark}

\newtheorem{example}[theorem]{Example}

\newtheorem{openproblem}[theorem]{Open Problem}
\newtheorem{convention}[theorem]{Convention}

\usepackage{tikz}
\usepackage{quiver}

\renewcommand{\epsilon}{\varepsilon}
\newcommand{\el}{\mathrm{el}}
\newcommand{\down}{\downarrow \hspace*{-0.6mm} }

\newcommand{\catc}{category, }

\newcommand{\prs}{presheaf }

\newcommand{\fg}{finitely generated }

\newcommand{\fgp}{finitely generated. }
\newcommand{\fp}{finitely presentable }
\newcommand{\fpc}{finitely presentable, }
\newcommand{\fpp}{finitely presentable. }
\newcommand{\gr}{graduated }

\newcommand{\cata}{\mathcal{A}}

\newcommand{\op}{\mathrm{op}}
\newcommand{\Set}{\mathbf{Set}}

\usepackage[dvipsnames,svgnames]{xcolor}
\usepackage[normalem]{ulem} 

\usepackage{tcolorbox}
\tcbuselibrary{breakable} 
\usepackage{lipsum}
\usepackage{hyperref}

\usepackage{verbatim} 

\title{Graduated  categories of presheaves}

\author{J. Ad\'amek and L. Sousa\footnote{The authors acknowledge financial support by the Centre for Mathematics of the University of Coimbra (CMUC, https://doi.org/10.54499/UID/00324/2025) under the Portuguese Foundation for Science and Technology (FCT), Grants UID/00324/2025 and UID/PRR/00324/2025.}}

\begin{document}

\maketitle

\begin{center}
    \itshape
    Dedicated to Ross Street
\end{center}
\vspace{1.5em}

\begin{abstract}We study conditions on a small category $\cata$ under which  every finitely generated object of its presheaf category is finitely presentable. Example: this holds for a group $\cata$ iff $\cata$ is Noetherian. For ordinals $\cata=\alpha$ this holds iff $\alpha\leq \omega$,  whereas this always holds for $\cata=\alpha^{\op}$.

Various important properties of set functors also apply to endofunctors on
locally finitely presentable categories which are {\em graduated}.  This means that every finitely presentable object $X$ carries a grade (in $\mathbb{N}$) and grades respect subobjects and strong quotients of $X$. We characterize presheaf categories $\Set^{\cata^{\op}}$ which are graduated  for three types of small categories $\cata$. If $\cata$ is a poset, all down sets of elements must be finite. For a group $\cata$, a finite bound on the length of a chain of subgroups must exist. In the case of cartesian categories $\cata$, every object must carry a finite number of sieves. Example: all finitary set functors form a \gr \catc here $\cata$ is the dual of finite sets.

A closely related concept is a locally finitely presentable category with the descending chain  condition (a {\em DCC} category): every finitely presentable object has only finite descending chains of subobjects or strong quotients. The presheaf category on a group $G$ is $DCC$ iff $G$ has no infinite chain of subgroups. Example:  presheaves on the group $\mathbb{Z}$ are not $DCC$, but finite generation implies finite presentation.
\end{abstract}

\section{Introduction}
 We investigate finitarity properties of the categories of presheaves
$\Set^{\mathcal{A}^{\op }}$.

Which categories $\cata$ have the property that every finitely generated \prs is finitely presentable (shortly fg = fp)?
If $\cata$ is a group, then this holds iff $\cata$ is Noetherian
(every ascending chain of subgroups is finite) (Theorem~\ref{thm:4.5}). If $\cata$ has finite products,
then this holds iff every sieve is finitely generated (Corollary~\ref{cor:*3.3}). For general categories,
a sufficient condition for fg = fp is presented in Section 3: every
bi-sieve is finitely generated.

Another property we investigate is graduatedness, since this implies (as we proved in
\cite{AS2024}) that finitary endofunctors on $\Set^{\mathcal{A}^{\op }}$
are right adjoints iff they preserve countable limits:

\begin{definition}
A locally finitely presentable category is \textit{graduated} if every finitely presentable object $X$ carries a grade, a natural number, such that all (proper) subobjects and all (proper)  strong quotients are also finitely presentable, and  have grades (strictly) lower than $X$.
\end{definition}

A number of graduated categories were presented in \cite{AS2024}: sets, graphs, vector spaces, Boolean algebras, etc.

For three classes of small categories $\mathcal{A}$ we give a necessary and sufficient condition for the presheaf category to be graduated:

\begin{enumerate}[(1)]
    \item If $\mathcal{A}$ is a group: there exists a longest finite chain of subgroups.
    Example: presheaves on $\mathbb{Z}$ are not graduated (Section 5).

  Analogously for a groupoid: every automorphism group fulfills the above condition. Example:
     $\text{Set}^{\mathcal{B}^{op}}$ is graduated for the category $\mathcal{B}$ of finite sets and bijections.
    \item If $\mathcal{A}$ is a poset: every element has a finite down-set (Theorem~\ref{thm:6.3}). Example: For an ordinal $\alpha$ the category $\Set^{\alpha^{\op}}$ is graduated iff $\alpha \leq \omega$.
    \item If $\mathcal{A}$ has finite products: every object carries finitely many sieves (Theorem~\ref{thm:new5.1}). Example:
presheaves on the category $\mathcal{F}$ of finite sets and mappings ($\text{Set}^{\mathcal{F}^{op}}$), essentially the category of finitary set functors,  are graduated.
 \end{enumerate}

Another application of graduated categories $\mathcal{K}$ is exhibited in \cite{AMM}: the terminal coalgebra of every finitary endofunctor of $\mathcal{K}$ preserving finite intersections is obtained in countably many steps of the standard construction (see Section 6 for details). Actually, a slightly milder condition on $\mathcal{K}$ is enough for that result.

\begin{definition}
A locally finitely presentable category satisfies the \textit{descending chain condition} (shortly, is a \textit{DCC category}) if for every finitely presentable object $A$ we have that
\begin{enumerate}[a.]
    \item Every subobject is finitely presentable, and all descending chains of subobjects are finite.
    \item Every strong quotient is finitely presentable, and all descending chains of strong quotients are finite (see Convention \ref{con:con1}).
\end{enumerate}
\end{definition}

For example, given a group $G$, the presheaf category $\Set^{G^{\op}}$ is  DCC if and only if every  chain of subgroups is finite (Corollary \ref{cor:4.10}).

\section{Finitely presentable presheaves}

In this preliminary section we  introduce definitions and notations needed in later sections and study conditions under which a \fp \prs is \fgp

Throughout the paper, $\cata$ denotes a small category.

\begin{convention}\label{con:con1} Given an object $X$ in a category $\mathcal{K}$, the ordering of subobjects $m \colon M \rightarrowtail X$ is well known: one follows the direction of arrows in the slice category $\mathcal{K}/X$. The ordering of quotients $e \colon X \twoheadrightarrow Q$ is less standard. In our paper, we follow the direction of arrows of the co-slice category $X/\mathcal{K}$: given $e' \colon X \twoheadrightarrow Q'$, then $e \leq e'$ means that the commutative triangle below exists:

\[
\begin{tikzcd}
& X \arrow[dl, "e"'] \arrow[dr, "e'"] & \\
Q \arrow[rr] & & Q'
\end{tikzcd}
\]

Thus, in $\Set$ the terminal object yields the largest quotient.
\end{convention}

\begin{remark}\label{rem:2.1}
\begin{enumerate}[(1)]
    \item The presheaf category $\Set^{\mathcal{A}^{\op}}$  is equivalent to a variety of $S$-sorted unary algebras \cite{AR}, Ex.~3.5.13, with $S=\text{obj}\, \cata$, and operations indexed by the morphisms of $\cata$: given a
 morphism $f\colon a \to b$ in $\mathcal{A}$, the sorting of the operation is $b$ (input) and $a$ (output).
The variety $\Set^{\mathcal{A}^{\op}}$ is presented by the following equations:
\begin{equation*}\begin{array}{ll}
                  (g ( f(x)) = h(x) & \text{for all }  h = g \circ f \text{ in } \cata; \\
                  \text{id}_a(x) = x & \text{for all sorts } a\\
                  \end{array}
\end{equation*}

\item Consequently, by \cite{AR}, 3.11, a presheaf $F$ is finitely generated iff  it has a finite set of elements $x_i$ belonging to $Fa_i$  $(i=0, \dots, n-1)$ which \emph{generates} $F$:   every element $x \in Fa$ has the form
\(x = Ff(x_i)
\)
for some $i < n$ and  some morphism $f\colon a \to a_i$ in $\mathcal{A}$.

\item
Every finitely generated presheaf $X$ is a quotient of a finite coproduct of  representable presheaves: use Yoneda Lemma on elements  $x_i \in Xa_i$ $(i<n)$ generating $X$.

\item The category of elements $\el F$ of a presheaf $F$ has as objects pairs $(a, x)$, where $a \in \text{obj}\mathcal{A}$ and $x \in Fa$. Morphisms from $(a, x)$ to $(b, y)$ are the morphisms $f: a \to b$ in $\mathcal{A}$ such that $Ff(y) = x$.

    \item In presheaf categories, every quotient is strong. This follows from the fact that $\Set$ satisfies this property, and strong epimorphisms in functor categories are given object-wise.

    \item If $E: \mathcal{A} \to \mathcal{B}$ is an equivalence between small categories, then the presheaf categories $\Set^{\mathcal{A}^{\op}}$ and $\Set^{\mathcal{B}^{\op}}$ are equivalent. Indeed, the functor $\Phi\colon  \Set^{\mathcal{B}^{\op}} \to \Set^{\mathcal{A}^{\op}}$ defined by precomposition with the equivalence,
    \begin{equation*}
        \Phi X = X \cdot E^{\op}\, ,
    \end{equation*}
    is an equivalence functor.
\end{enumerate}
\end{remark}

\begin{proposition}{\em (\cite{Kelly}, Theorem 5.37 and Corollary 5.41)}
The finitely presentable presheaves in $\mathbf{Set}^{\mathcal{A}^{\mathrm{op}}}$ are precisely the finite colimits of representable functors.
\end{proposition}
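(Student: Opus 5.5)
We prove both inclusions, using three standard facts: the Yoneda lemma, the density of representables, and closure of finitely presentable objects under finite colimits.

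\emph{Finite colimits of representables are finitely presentable.} By the Yoneda lemma, $\hom(\mathcal{A}(-,a), F)\cong Fa$ naturally in $F$. Filtered colimits in $\Set^{\mathcal{A}^{\op}}$ are computed objectwise, so $\hom(\mathcal{A}(-,a),-)$ is evaluation at $a$ and preserves them. Hence every representable is finitely presentable. In any category, a finite colimit of finitely presentable objects is finitely presentable, because finite limits commute with filtered colimits in $\Set$:
\[
\hom(\mathrm{colim}_i A_i, -)\cong \lim_i \hom(A_i,-)
\]
is a finite limit of functors that preserve filtered colimits.

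\emph{Every finitely presentable presheaf is a finite colimit of representables.} Let $F$ be finitely presentable.
\begin{enumerate}[(1)]
\item By density, $F$ is the colimit of the diagram $\el F\to \Set^{\mathcal{A}^{\op}}$ sending $(a,x)\mapsto \mathcal{A}(-,a)$.
\item Rewrite this colimit as a filtered colimit of finite colimits. The finite subdiagrams $D\subseteq \el F$ form a directed poset, and $F=\mathrm{colim}_D C_D$, where $C_D$ is the colimit of the restriction of the diagram to $D$. The index poset is filtered because a union of two finite subdiagrams is finite, and the canonical comparison is an isomorphism by the usual decomposition of colimits.
\item Since $F$ is finitely presentable, $\mathrm{id}_F$ factors through some colimit injection $C_D\to F$. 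So $F$ is a retract of $C_D$, a finite colimit of representables.
\item A retract is the coequalizer of the idempotent $e\colon C_D\to C_D$ and $\mathrm{id}$. Hence $F$ is itself a finite colimit of representables: the coequalizer of the two maps $C_D\rightrightarrows C_D$ is still a finite colimit, and composing the two finite colimits gives one.
\end{enumerate}

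The main point of care is step (2). One must check that the finite subdiagrams, closed under adding the morphisms between their objects, form a directed set. One must also check that the colimit over all of $\el F$ agrees with the filtered colimit of the partial colimits. Both are standard: every cocone on the partial colimits restricts compatibly, and conversely.

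Alternatively, $F$ is a reflexive coequalizer of coproducts of representables, $\coprod_{\mathrm{mor}\,\el F}\rightrightarrows\coprod_{\mathrm{obj}\,\el F}\to F$, which can be truncated in the same way.
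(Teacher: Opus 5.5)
The paper gives no argument for this proposition beyond the citation to Kelly. Your outline is the standard proof of the fact, and the first half together with steps (1), (3) and (4) is fine. The one place where the argument breaks is step (2), the step you flagged yourself, because you do not pin down the right notion of ``finite subdiagram''. It must mean a finite subgraph of the underlying graph of $\el F$: finitely many objects and finitely many morphisms between them, not required to be closed under composition and not full.

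Both other readings fail. Take $\mathcal{A}=\mathbb{Z}$ and let $F$ be the terminal $\mathbb{Z}$-set. It is finitely presentable, being $\mathrm{coeq}(\mathrm{id},t)$ for the translation $t\colon\mathbb{Z}\to\mathbb{Z}$, and $\el F$ is the one-object category $\mathbb{Z}$.
\begin{enumerate}[(1)]
\item If $D$ ranges over finite subcategories: the only finite subcategory is the trivial one. So the directed colimit of the partial colimits is the regular $\mathbb{Z}$-set $\mathbb{Z}$, not $F$. In general, the subcategory generated by the union of two finite subcategories can be infinite.
\item If each $D$ contains all morphisms of $\el F$ between its objects, as your closing remark suggests: then $D$ is all of $\el F$, an infinite diagram. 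So $C_D$ is not a finite colimit, and step (3) proves nothing.
\end{enumerate}

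With finite subgraphs everything works. The union of two finite subgraphs is again one, and every object and morphism of $\el F$ lies in one. A cocone over a category is the same thing as a cocone over its underlying graph, so $\mathrm{colim}_D C_D\cong F$. Each $C_D$ is then the coequalizer of two maps between finite coproducts of representables; this is exactly the truncation in your alternative formulation.

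A minor point concerns step (4). ``Composing the two finite colimits gives one'' is immediate if ``finite colimits of representables'' means the closure of the representables under finite colimits. If you want a single finite diagram, let $\iota_d\colon\mathcal{A}(-,a_d)\to C_D$ be the colimit injections. By Yoneda and the pointwise computation of $C_D$, each $e\cdot\iota_d$ equals $\iota_{d'}\cdot\mathcal{A}(-,f_d)$ for some $d'$ and some morphism $f_d$ of $\mathcal{A}$. Adjoining these finitely many edges to $D$ gives a finite diagram of representables whose colimit is $\mathrm{coeq}(e,\mathrm{id})\cong F$.
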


{
\begin{example}\label{exa:2.4}
The difference between finitely presentable presheaves and finitely generated ones is the difference between
\begin{enumerate}[a.]
 	\item finite colimits of hom-functors, and
 	\item quotients of finite coproducts of hom-functors.
 \end{enumerate}
Here we demonstrate a simple example of a finitely generated presheaf on the ordinal $\omega+1$ that is not finitely presentable. In Section 6 a necessary and sufficient condition for a. and b. to coincide in case of posets is proved (which $\omega$ satisfies but $\omega+1$ does not).
 	Recall that $\omega+1$ is the chain of natural numbers with a largest element $\omega$ added. Presheaves on it (which form the category of chains of sets indexed by
 all ordinals $n \le \omega$ with the dual order) do not fulfill fg = fp. Indeed, the presheaf $T$ with the following values
\[
Tn = \begin{cases}
\{t\} & n < \omega \\
\{0,1\} & n = \omega
\end{cases}
\]
is generated by the elements $0$ and $1$.

However, $T$ is not finitely presentable. To show this, let $X$ be the constant presheaf with value $\{0,1\}$. Consider the $\omega$-chain of presheaves $X_k$ ($k < \omega$) obtained from $X$ by merging, for every $n < k$, the two elements of $X_n$. The connecting maps $\beta_{k,m} \colon X_k \to X_m$ ($m \ge k$) are the obvious quotients. Then $T$ is the colimit of this chain, with the colimit cocone $\gamma_k \colon X_k \to T$ formed by obvious quotients, too. If $T$ were finitely presentable, then the identity morphism $\mathrm{id}_T \colon T \to \mathrm{colim}\, X_k$ would have an essentially unique (!) factorization through $\gamma_k$ for some $k < \omega$. To give a factorizing morphism
\[
\alpha \colon T \to X_k
\; \text{ with $\mathrm{id}_T = \gamma_k \cdot \alpha$} \]
 means precisely to choose $0$ or $1$: when we choose $0$, then all components $\alpha_n$ for $n \ge k$ take $t$ to $0$, analogously for choosing $1$. This contradicts the essential uniqueness: we have two factorizations, say, $\alpha^0$ and $\alpha^1$, which are merged by $\gamma_k$. But no connecting map $\beta_{k,m}$ ($m \ge k$) merges $\alpha^0$ and $\alpha^1$: indeed, we have $X_m = \{0,1\}$, and the $m$-th component of the composite $\beta_{k,m} \cdot \alpha^i$ takes $t$ to $i$ (for $i = 0,1$).
\end{example}
}

\begin{remark}
To give a subobject of $\cata(-,a)$ in $\Set^{\cata^{\op}}$ means precisely  to give a \emph{sieve} on $a$: a collection of morphisms with codomain $a$, closed under precomposition.
\end{remark}

 Below we work with subobjects of a product of presheaves $\cata(-,a)\times \cata(-,a')$:

\begin{definition}
A \emph{bi-sieve} on a pair $(a, a')$ of objects is a collection $S$ of cones with codomain $(a,a')$, closed under precomposition: for every morphism $u: \bar{b} \to b$ we have that
\[\begin{tikzcd}
	& b & \\
	a && {a'}
	\arrow["f"', from=1-2, to=2-1]
	\arrow["{f'}", from=1-2, to=2-3]
\end{tikzcd}\; \;  \in S\qquad \qquad \; \, \text{implies} \qquad \qquad
\begin{tikzcd}
	& {\bar{b}} & \\
	& b \\
	a && {a'}
	\arrow["u", from=1-2, to=2-2]
	\arrow["f"', from=2-2, to=3-1]
	\arrow["{{f'}}", from=2-2, to=3-3]
\end{tikzcd} \,\; \in S.\]
\end{definition}

\begin{example}\label{exa:2.6}
\begin{enumerate}[(1)]
    \item If $\cata$ has binary products, then bi-sieves on $(a, a')$ are precisely sieves on $a \times a'$: to every cone $(f,f')$ corresponds the map $\langle f, f'\rangle$.
    \item A group $\cata$ has only two sieves: $\emptyset$ and $\cata$. Indeed, if $S$ is a sieve with $f_0 \in S$, then for every $f \in \cata$, we have $f = f_0  (f_0^{-1}  f)$, which belongs to $S$. Bi-sieves correspond bijectively to subsets $X \subseteq \cata$: Let $S_X$ denote the bi-sieve of pairs $(f, g)$ with $fg^{-1} \in X$. Then every bi-sieve $S$ is $S_X$ where $X = \{f\in \cata \mid (f, e) \in S \}$ for the unit $e$.
    \item Every sieve $S$ on $a$ can be considered as a bi-sieve on $(a, a)$: it consists of all $(f, f)$ with $f \in S$.
\end{enumerate}
\end{example}

\begin{definition}
A sieve $S$ is \emph{finitely generated} if there is a finite subset $S_0 \subseteq S$ such that $S$ consists of all precomposites of members of $S_0$.

Analogously, a bi-sieve is \emph{finitely generated} if it consists of all precomposites of members of a finite subset.
\end{definition}

\begin{example}\label{exa:2.8} Let $\cata$ be a group. All sieves are finitely generated: this is clear for the empty sieve, and the sieve $\cata$ is generated by the unit $e$.

In contrast, if all bi-sieves are finitely generated, then $\cata$ is finite.
Take the bi-sieve $\cata\times \cata$ and let $M$ be a finite set of its generators.
For every pair $(x, e)$ in $\cata\times \cata$  there exists $(m_1, m_2)$ in $M$ and $g$ in $\cata$ with $(x, e)=(m_1 g, m_2 g)$. Thus $x= m_1 m_2^{-1}$ which proves that there are only finitely many elements $x$.
\end{example}

Thus, a sieve $S$, considered as a subobject of the representable presheaf, is finitely generated if and only if it is a finitely generated object of $\Set^{\cata^{\op}}$.

The collection of all sieves on $a$ forms a poset ordered by inclusion.

\begin{lemma}\label{lem:2.9}
All sieves  on an object $a$ (or bi-sieves on a pair $(a,a')$) are finitely generated iff the poset of all sieves on $a$ (or bi-sieves on  $(a,a')$) contains  no infinite ascending chain.
\end{lemma}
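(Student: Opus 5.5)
This is the standard Noetherian argument: ``every ideal finitely generated iff ascending chain condition'', carried out in the lattice of sieves (or bi-sieves). Sieves and bi-sieves are handled uniformly. A bi-sieve is a set of cones closed under precomposition, and a sieve is the special case of single morphisms. For a set $M$ of cones, write $\langle M\rangle$ for the set of all precomposites of members of $M$. This is the least (bi-)sieve containing $M$. Two facts are used throughout: unions of chains of (bi-)sieves are again (bi-)sieves, and $\langle M\rangle \subseteq S$ whenever $M\subseteq S$ and $S$ is a (bi-)sieve.

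For the forward direction, assume all (bi-)sieves are finitely generated and let $S_0\subseteq S_1\subseteq S_2\subseteq\cdots$ be an ascending chain. Its union $S=\bigcup_n S_n$ is closed under precomposition, since any cone lies in some $S_n$ and so do its precomposites. Hence $S$ is a (bi-)sieve, generated by some finite set $M$. Each member of $M$ lies in some $S_n$, so by finiteness $M\subseteq S_N$ for a single $N$. Then $S=\langle M\rangle\subseteq S_N\subseteq S$, so the chain is stationary from $N$ on. An infinite ascending chain here means an infinite strictly increasing chain, so none exists.

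For the converse, suppose some (bi-)sieve $S$ is not finitely generated; we build a strictly increasing chain inside $S$. Put $M_0=\emptyset$. Given a finite $M_n\subseteq S$, the sieve $\langle M_n\rangle$ is contained in $S$ but differs from it, because $S$ is not finitely generated. Choose a cone $c_n\in S\setminus\langle M_n\rangle$ and set $M_{n+1}=M_n\cup\{c_n\}$. Then
\[
\langle M_0\rangle\subsetneq\langle M_1\rangle\subsetneq\langle M_2\rangle\subsetneq\cdots
\]
is an infinite ascending chain of (bi-)sieves on $a$ (resp.\ $(a,a')$), which is a contradiction. This construction uses only dependent choices.

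There is no real obstacle here. The only points needing care are the closure of the union under precomposition and the reading of ``no infinite ascending chain'' as ``no infinite strictly increasing chain'', equivalently, every ascending chain stabilizes.
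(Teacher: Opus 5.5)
Your proof is correct and is essentially the paper's argument. In the forward direction, the union of the chain is generated by a finite set that already lies in some $S_N$. In the converse, adding elements of $S$ outside the currently generated subsieve one at a time yields a strictly ascending chain, which cannot be infinite. You phrase the converse by contradiction and start from $M_0=\emptyset$, which also covers the empty sieve, but these differences are cosmetic.
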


\begin{proof} We present a proof for sieves, that for bi-sieves is analogous.

 (1)  Necessity. Suppose all sieves are finitely generated. Given a set $\{S_i\}$ $(i \in \mathbb{N})$ of sieves on $a$ such that  $S_i  \subseteq S_{i+1}$ for $i \in \mathbb{N}$,
the sieve
$$S = \bigcup_{i \in I} S_i$$
 is generated by a finite subset $M$. Then there is $i$ with $M \subseteq S_i$, thus $S \subseteq S_i$ and $S_k=S_i$ for all $k\geq i$.

 \medskip
(2) Sufficiency. Suppose the poset of all sieves on $a$ has the above property. We prove that every sieve $S$ on $a$ is finitely generated.
 For every  $S\not=\emptyset$ choose  $f_0 \in S$, and let  $S^0$ be the sieve generated by $f_0$ (consisting of all precomposites  $f_0\cdot u$). If $S=S^0$ then it is finitely generated, if not, let  $f_1\in S-S^0$ and let $S^1$ be the  subsieve generated by $f_0$ and $f_1$. This way,  we obtain an ascending chain $S^0\subset S^1\subset \dots$ which, by hypothesis, cannot be infinite. That is, $S$ is generated by $\{f_i\}_{i\le n}$ for some $n$.
\end{proof}

\begin{proposition}\label{pro:fg=fp}
For every small category the following conditions are equivalent:
\begin{enumerate}[(a)]
\item Finitely presentable presheaves are closed under subobjects and quotients.
\item $fg = fp$ holds: every finitely generated presheaf is finitely presentable.
\end{enumerate}
\end{proposition}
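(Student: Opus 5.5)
We prove the two implications separately. Both rest on two standard facts about the locally finitely presentable category $\Set^{\cata^{\op}}$: every finitely presentable object is finitely generated, and every finitely generated object is a (strong) quotient of a finitely presentable one. The second fact is Remark~\ref{rem:2.1}(3): a finitely generated presheaf is a quotient of a finite coproduct of representables, and such a coproduct is finitely presentable.

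\emph{(a) $\Rightarrow$ (b).} Let $X$ be finitely generated. By Remark~\ref{rem:2.1}(3) there is an epimorphism $e\colon \coprod_{i<n}\cata(-,a_i)\twoheadrightarrow X$. The domain is a finite colimit of representables, so it is finitely presentable by the Proposition quoted from \cite{Kelly}. Since $X$ is a quotient of it, (a) yields that $X$ is finitely presentable.

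\emph{(b) $\Rightarrow$ (a).} Let $X$ be finitely presentable. Then $X$ is in particular finitely generated.

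For quotients: if $e\colon X\twoheadrightarrow Q$, then the images under $e$ of a finite generating set of $X$ generate $Q$. Concretely, if $x\in Xa$ has the form $Xf(x_i)$, then $e_a(x)=Qf(e_{a_i}x_i)$, and every element of $Q$ is of this form since each $e_a$ is surjective. So $Q$ is finitely generated, hence finitely presentable by (b).

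For subobjects $m\colon M\rightarrowtail X$ we need $M$ to be finitely generated, and this is the main obstacle: subobjects of finitely generated objects need not be finitely generated in general. The plan is to exploit that (b) applies not only to $X$ but to every finitely generated object. First take a finite set of elements $y_j\in M b_j$, say $M=\bigcup_k M_k$ as the directed union of its finitely generated subpresheaves $M_k$. Next consider the quotients $X/M_k$, meaning the pushouts of $M_k\rightarrowtail X$ along $M_k\to 1$, or better the presheaf $X$ with the elements of $M_k$ identified. These are quotients of $X$ and hence finitely presentable. In the directed colimit $\operatorname{colim}_k X/M_k = X/M$, which is finitely generated and thus finitely presentable by (b), the identity of $X/M$ factors through some $X/M_k$ essentially uniquely. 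Finally, compare the two parallel maps $X\to X/M_k$ and $X\to X/M_k$ obtained by composing the factorization with the quotient. They become equal in the colimit, so they already agree at some stage $k'$. This should force $M\subseteq M_{k'}$.

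If this identification trick turns out to be messy, because the quotient "identify $M_k$ to a point" is not a clean congruence when $M_k$ has several sorts, we will instead use the kernel-pair formulation. A subobject $M\rightarrowtail X$ is determined by the congruence on $X+X$ that glues the two copies along $M$. The quotient $X+_M X$ is a quotient of the finitely presentable object $X+X$, hence finitely presentable by (b). Since $X+_M X=\operatorname{colim}_k X+_{M_k}X$ is a directed colimit of quotients, the identity factors through some stage $X+_{M_k}X$. Because the connecting maps are epimorphisms, this forces $X+_{M_k}X\to X+_M X$ to be an isomorphism. Comparing which elements are glued in the two pushouts then gives $M_k=M$. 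Hence $M$ is finitely generated, and so finitely presentable by (b).
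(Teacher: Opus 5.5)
Your fallback plan is the paper's proof. The paper also amalgamates the finitely presentable object with itself along the subobject and along its finitely generated subobjects, and views $X+_MX$ as a directed colimit of the quotients $X+_{M_k}X$ of $X+X$. It uses (b) to make that colimit finitely presentable, and reads off $M$ from an invertible colimit map because the amalgam square is a pullback. Your (a)$\Rightarrow$(b) and your closure under quotients (which the paper leaves implicit) are fine.

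One inference in the fallback does not follow as written. You claim that a factorization $g$ with $z_k\cdot g=\id$ through $z_k\colon X+_{M_k}X\to X+_MX$ forces $z_k$ to be invertible because the connecting maps are epic. It does not. Already in $\Set$, take $X=M=\{0,1\}$ and $M_k=\{0\}$: then $z_k$ maps a three-element set onto a two-element set, so it is split epic but not an isomorphism. What you need is the \emph{essential uniqueness} step:
\begin{itemize}
\item[$\cdot$] Both $g\cdot z_k$ and $\id$ are merged by $z_k$, and $X+_{M_k}X$ is finitely presentable, being a quotient of $X+X$.
\item[$\cdot$] Hence some later connecting map already merges them: $z_{kk'}\cdot g\cdot z_k=z_{kk'}$.
\item[$\cdot$] Then $z_{k'}$ is inverse to $z_{kk'}\cdot g$. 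Indeed $z_{k'}\cdot z_{kk'}\cdot g=z_k\cdot g=\id$. For the other composite, $(z_{kk'}\cdot g\cdot z_{k'})\cdot z_{kk'}=z_{kk'}$, and cancelling the epimorphism $z_{kk'}$ is exactly where epi-ness enters.
\end{itemize}
So the conclusion is $M_{k'}=M$, not $M_k=M$. You had this ``agree at a later stage $k'$'' idea in your first plan; it has to be carried into the fallback.

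The first plan itself should be dropped, but not for the reason you gave. The pushout along $M_k\to 1$ is not a quotient of $X$: at sorts $a$ with $M_ka=\emptyset$ it adds a new point. The sortwise collapse of $M_k$ is a perfectly good congruence, since restriction maps send $M_k$ into $M_k$. However, it cannot see sorts where $Ma$ is a singleton, so $X/M_{k'}\cong X/M$ does not force $M\subseteq M_{k'}$. The amalgam avoids this because, being a pullback, it determines $M$.
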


\begin{proof} (a) $\Rightarrow$ (b) follows from Remark~\ref{rem:2.1}(3).

(b) $\Rightarrow$ (a).  In view of \ref{rem:2.1}(3),  we need to prove that if $fg = fp$ holds, then for every subobject $m: X \to Y$ with $Y$ a finitely presentable presheaf, $X$ is also finitely presentable.

\medskip
(1)
We first consider a subobject $m: X \to Y$ in $\Set$. We form the amalgam (pushout) on the left
\[\begin{tikzcd}
	& X & \\
	Y && Y \\
	& Z
	\arrow["m"', from=1-2, to=2-1]
	\arrow["{{m}}", from=1-2, to=2-3]
	\arrow["e"', from=2-1, to=3-2]
	\arrow["{e'}", from=2-3, to=3-2]
\end{tikzcd}
\qquad \qquad
\begin{tikzcd}
	& {X_i} & \\
	X && X \\
	Y && Y \\
	& {Z_i}
	\arrow["{m_i}"', from=1-2, to=2-1]
	\arrow["{m_i}", from=1-2, to=2-3]
	\arrow["m"', from=2-1, to=3-1]
	\arrow["m", from=2-3, to=3-3]
	\arrow["{e_i}"', from=3-1, to=4-2]
	\arrow["{e'_i}", from=3-3, to=4-2]
\end{tikzcd}\]
This is also a pullback. Suppose that $X = \bigcup_{i \in I} X_i$ is a directed union of subsets $m_i: X_i \to X$. Form the amalgams on the right. They form  a direct diagram $D$ of quotients:
\(
[e_i, e'_i] : Y + Y \to Z_i
\)
(since $m_i \le m_j$ implies $[e_i, e'_i] \le [e_j, e'_j]$). The colimit of $D$ is given by the canonical morphisms $z_i: Z_i \to Z$ with $z_i \cdot e_i = e$ and $z_i \cdot e'_i = e'$.

Observe that the connecting maps $z_{ji}\colon Z_j\to Z_i$ (for $m_i\subseteq m_j$) are epimorphisms, and if $z_i$ is an isomorphism, then $X_i = X$.

\medskip

(2) Back to subobjects $m: X \to Y$ in $\Set^{\cata^{\op}}$, with $Y$ finitely presentable, we express $X$ as a direct union of all  finitely generated subobjects $m_i: X_i \to X$ $(i \in I)$. We form the amalgams as above (in $\Set^{\cata^{\op}}$). We again obtain a direct diagram $D$ of quotients $Z$ of $Y +Y$ with colimit $z_i: Z_i \to Z$. Moreover, $Z$ is finitely presentable because, as a quotient of $Y + Y$, it is finitely generated. Consequently, the morphism $\id_Z\colon Z\to \text{colim} D$ factorizes essentially uniquely through $z_j$ for some $j\in I$: we have
$$g\colon Z\to Z_j \text{ with } z_j\cdot g=\id_Z.$$
The colimit map $z_j$ coequalizes $g\cdot z_j$ and $\id_Z$, therefore some connecting map $z_{ji}\colon Z_j\to Z_i$ for $i\geq j$ also coequalizes them:
$$z_{ji}=z_{ji}\cdot g\cdot z_j.$$
We conclude that $z_i$ is an isomorphism, inverse to $z_{ji}\cdot g$: first
$$z_i\cdot (z_{ji}\cdot g)=z_j\cdot g=\id.$$
To prove $(z_{ji}\cdot g)\cdot z_i=\id_Z$, use that $z_{ji}$ is epic:
$$(z_{ji}\cdot g\cdot z_i)\cdot z_{ji}=z_{ji}\cdot g\cdot z_j=z_{ji}.$$
We conclude that $X\simeq X_i$ is finitely generated, hence, finitely presentable.
\end{proof}

\begin{example}\label{exa:2.11} If $\cata$ is a group, then fg=fp holds iff $\cata$ is Noetherian: all subgroups are finitely generated (Theorem~\ref{thm:4.5}).
\begin{enumerate}[(1)]
\item The group $\mathbb{Z}$ of integers is Noetherian. The presheaf category is equivalent to the variety of unary algebras on one invertible operation. Thus, every finitely generated algebra in this variety is finitely presentable.

    \item More generally, every finitely generated abelian group is Noetherian (Example~\ref{exa:4.5}).

\item
In contrast, the free group $F_2$ on generators $a, b$ is not Noetherian. Indeed, its commutator subgroup $[F_2, F_2]$ is the free group on the generators
\[
a^n b^m a^{-n} b^{-m} \quad \text{for all non-zero integers } n, m
\]
(\cite[Proposition~I.1.4]{S} applied to $A = B = \mathbb{Z}$, thus $A \ast B = \mathbb{Z} \ast \mathbb{Z} = F_2$ and $R = [F_2, F_2]$).
\end{enumerate}
\end{example}

\section{Sufficient conditions}

We start by proving that if a category $\mathcal{A}$ has finitely generated bi-sieves, then $fg = fp$: every finitely generated presheaf is finitely presentable.
   If, moreover, every pair of objects $a, b$ carries finitely many bi-sieves only, then we prove that $\Set^{\mathcal{A}^{\text{op}}}$ is graduated.

\begin{theorem}\label{thm:3.1}
If every bi-sieve in $\mathcal{A}$ is finitely generated, then all finitely generated presheaves are finitely presentable in $\mathbf{Set}^{\mathcal{A}^{\text{op}}}$.
\end{theorem}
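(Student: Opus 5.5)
Let $F$ be a finitely generated presheaf, generated by $x_i\in Fa_i$ ($i<n$) as in Remark~\ref{rem:2.1}(2). By Remark~\ref{rem:2.1}(3) we have an epimorphism $e\colon C=\coprod_{i<n}\cata(-,a_i)\to F$, and $F$ is the coequalizer of the kernel pair $p_1,p_2\colon K\to C$ of $e$. A coequalizer of two morphisms between finitely presentable objects is a finite colimit of representables, hence finitely presentable by the Proposition quoted from \cite{Kelly}. So it suffices to find a finitely generated subobject $K_0\subseteq K$ whose two projections have the same coequalizer as $K$, i.e.\ $K_0$ generates $K$ as an equivalence relation. Then $F$ is the coequalizer of $K_0$ followed by a morphism $\coprod_j \cata(-,b_j)\to K_0\rightrightarrows C$ from finitely many representables. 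The composite pair has the same coequalizer, so $F$ is a finite colimit of representables.

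The key step is to analyse $K\subseteq C\times C$. Products of presheaves are computed pointwise and coproducts distribute over products, so
\[
C\times C\cong \coprod_{i,j<n}\cata(-,a_i)\times\cata(-,a_j).
\]
Hence $K=\coprod_{i,j}K_{ij}$, where $K_{ij}=K\cap(\cata(-,a_i)\times\cata(-,a_j))$. An element of $K_{ij}$ at stage $b$ is a cone $(f,f')$ with $f\colon b\to a_i$, $f'\colon b\to a_j$ and $Ff(x_i)=Ff'(x_j)$. Since $K$ is a subpresheaf, this collection is closed under precomposition, so $K_{ij}$ is exactly a bi-sieve on $(a_i,a_j)$. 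By hypothesis each $K_{ij}$ is finitely generated by cones $(f_k,f'_k)$ with domains $b_k$. By Yoneda these give morphisms $\cata(-,b_k)\to K_{ij}$ that are jointly epic.

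Collecting these over the $n^2$ pairs $(i,j)$ gives an epimorphism from a finite coproduct of representables onto $K$ itself. So $K$ is finitely generated, and no passage to a smaller $K_0$ is needed. Precomposing $p_1,p_2$ with this epimorphism does not change the coequalizer, since the coequalizer only depends on the equivalence relation generated, and an epimorphism onto $K$ yields the same relation. Therefore $F$ is a coequalizer of two morphisms between finite coproducts of representables, hence finitely presentable.

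I expect no serious obstacle. The main point to check is the identification of the components of the kernel pair with bi-sieves: the cones must be cones over the pair $(a_i,a_j)$, and closure under precomposition must follow from naturality. The case $i=j$ is covered by the same argument, since a bi-sieve on $(a,a)$ is allowed there.
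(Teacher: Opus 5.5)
Your proposal is correct and follows essentially the same route as the paper. The paper likewise splits $C\times C$ into the coproduct of the $\cata(-,a_i)\times\cata(-,a_j)$ and reads the components of the kernel pair as finitely generated bi-sieves. It then concludes that $F$ is the coequalizer of the kernel pair precomposed with an epimorphism from a finite coproduct of representables, which leaves the coequalizer unchanged because that map is epic.
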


\begin{proof}
(1) Given a presheaf $H = \coprod_{i=1}^{n-1} \mathcal{A}(-, a_i)$, then every subobject $K$ of $H \times H$ is finitely generated. Indeed, $H \times H$ is a coproduct of presheaves $\mathcal{A}(-, a_i) \times \mathcal{A}(-, a_j)$, thus $K$ is a coproduct of subobjects of such presheaves. Each of these subobjects, that is, bi-sieves on $(a_i, a_j)$, is finitely generated. Thus, so is $K$.

\smallskip

(2) Let $F$ be a finitely generated presheaf. We have a quotient $\epsilon \colon H \twoheadrightarrow F$ for some $H$ as in (1). Then $\epsilon$ is the coequalizer of its kernel pair $\varphi_1,\varphi_2 \colon G \rightrightarrows H$. As $G$ is a subobject of $H \times H$ (via $\langle \varphi_1, \varphi_2\rangle$), it is finitely generated.

Therefore, we have a quotient $\bar{\epsilon}\colon \bar{H} \twoheadrightarrow G$ for some coproduct $\bar{H} = \coprod_{j=0}^{m-1} \mathcal{A}(-, b_j)$. We obtain $\epsilon$ as a coequalizer of the pair $\varphi_1\cdot \bar{\epsilon}$ and  $\varphi_2 \cdot \bar{\epsilon}$:
$$ \bar{H} \rightrightarrows H\xrightarrow{\epsilon} F.$$
 Indeed, as $\bar{\epsilon}$ is epic, this follows from $\epsilon=\text{coeq}( \varphi_1,\varphi_2)$.

Since $\bar{H}$ and $H$ are finitely presentable, so is $F$.
\end{proof}

\begin{example}\label{exa:3.2}
The above sufficient condition is, unfortunately, not necessary in general. For example,  $fg = fp$ holds for the group $\mathbb{\mathbb{Z}}$ (Example \ref{exa:2.11}(1)), but its bi-sieve $\mathbb{Z} \times \mathbb{Z}$ is not finitely generated. In fact, let $X \subseteq \mathbb{Z} \times \mathbb{Z}$ be a generating set. Thus, every pair $(a_1, a_2) \in \mathbb{Z} \times \mathbb{Z}$ has the form $(b_1 + c, \, b_2 + c)$ for some $(b_1, b_2) \in X$ and $c \in \mathbb{Z}$. Then $a_1 - a_2 = b_1 - b_2$. Since $a_1 - a_2$ can be an arbitrarily large integer, $X$ cannot be finite.
\end{example}

\begin{remark}\label{rem:3.2}
The following (more pleasing) condition

\centerline{ ``Every sieve in $\cata$ is finitely generated."}
 \noindent is {\em not} sufficient for $fg = fp$. Consider the free group $F_2$ on two generators. It does not fulfill $fg = fp$ (Example \ref{exa:2.11}(3)), but its sieves are \fg (Example \ref{exa:2.8}).
\end{remark}

{\color{black}
\begin{corollary}\label{cor:*3.2}
For every small category the following implications hold:

\medskip

     \centerline{bi-sieves are finitely generated $\Rightarrow$ presheaves fulfil fg=fp $\Rightarrow$ sieves are finitely generated.}
     \end{corollary}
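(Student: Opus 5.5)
The first implication is exactly Theorem~\ref{thm:3.1}, so nothing new is needed there. The work is in the second implication: assuming fg $=$ fp in $\Set^{\cata^{\op}}$, we must show that every sieve is finitely generated. I would obtain this from Proposition~\ref{pro:fg=fp}, which converts fg $=$ fp into closure of finitely presentable presheaves under subobjects.

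Let $S$ be a sieve on an object $a$, viewed as a subobject $S \rightarrowtail \cata(-,a)$. The representable $\cata(-,a)$ is finitely presentable, being a (trivial) finite colimit of representables. Since fg $=$ fp holds, condition (a) of Proposition~\ref{pro:fg=fp} applies, so the subobject $S$ is also finitely presentable. Every finitely presentable object is finitely generated, hence $S$ is a finitely generated presheaf.

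It remains to translate this back into the language of sieves, which the paper already does in the remark following Example~\ref{exa:2.8}. By Remark~\ref{rem:2.1}(2), $S$ has finitely many generating elements $x_i \in Sa_i$; these are morphisms $x_i\colon a_i\to a$ lying in $S$. Every element of $S$ then has the form $Sf(x_i)=x_i\cdot f$, so these finitely many morphisms generate $S$ as a sieve. Alternatively, one may avoid Remark~\ref{rem:2.1}(2): write $S$ as the directed union of its finitely generated subsieves, and use that a finitely presentable (hence finitely generated) object contained in a directed union of subobjects lies in one of them.

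I do not expect a genuine obstacle. The only step with content is that subobjects of finitely presentable presheaves are finitely presentable under fg $=$ fp, and Proposition~\ref{pro:fg=fp} already supplies it. After that, identifying finitely generated subpresheaves with finitely generated sieves is routine.
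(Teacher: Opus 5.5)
Your proposal is correct and matches the paper's argument: the first implication is Theorem~\ref{thm:3.1}, and the second follows from Proposition~\ref{pro:fg=fp} applied to sieves viewed as subobjects of the finitely presentable representables $\cata(-,a)$. Your explicit translation from finitely generated subpresheaves to finitely generated sieves just spells out what the paper leaves to the remark after Example~\ref{exa:2.8}.
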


Indeed, the right-hand implication follows from Proposition \ref{pro:fg=fp}, since sieves are precisely the subobjects of representable presheaves.

None of the above implications can be reversed:
For the left-hand one, see Example \ref{exa:3.2}. For the right-hand implication, use the free group on two generators: its sieves are finitely generated, but since it is not Noetherian (Example \ref{exa:2.11}(3)), it does not fulfill fg = fp (Theorem \ref{thm:4.5}).
}

{\color{black}
\begin{corollary}\label{cor:*3.3}
For a small category $\cata$ with binary products, presheaves fulfill fg = fp iff sieves in $\mathcal{A}$ are finitely generated.
\end{corollary}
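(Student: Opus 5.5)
Both directions follow from Corollary~\ref{cor:*3.2} once we notice that, in the presence of binary products, bi-sieves are nothing but sieves in disguise.

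For the forward direction (fg = fp implies every sieve is finitely generated) no products are needed. This is exactly the right-hand implication of Corollary~\ref{cor:*3.2}. In detail: by Proposition~\ref{pro:fg=fp}, finitely presentable presheaves are closed under subobjects. A sieve on $a$ is a subobject of the representable presheaf $\cata(-,a)$, which is finitely presentable, so the sieve is finitely presentable and in particular finitely generated. As observed after Example~\ref{exa:2.8}, a sieve is finitely generated as a sieve iff it is finitely generated as a presheaf.

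For the converse, assume every sieve in $\cata$ is finitely generated and use the left-hand implication of Corollary~\ref{cor:*3.2}, i.e.\ Theorem~\ref{thm:3.1}. It then suffices to show that every bi-sieve on every pair $(a,a')$ is finitely generated. By Example~\ref{exa:2.6}(1), bi-sieves on $(a,a')$ correspond bijectively to sieves on $a\times a'$, via $(f,f')\mapsto\langle f,f'\rangle$. Conceptually, this is because $\cata(-,a)\times\cata(-,a') \cong \cata(-,a\times a')$ naturally, so subobjects of the two presheaves coincide.

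The one point to check is that this correspondence preserves finite generation. Precomposition with $u$ takes the cone $(f,f')$ to $(fu,f'u)$, and we have $\langle f,f'\rangle u=\langle fu,f'u\rangle$. Hence if a finite set of morphisms $\langle f_k,f'_k\rangle$ generates the sieve on $a\times a'$, the corresponding finite set of cones $(f_k,f'_k)$ generates the bi-sieve. Alternatively, by Lemma~\ref{lem:2.9} both notions are equivalent to the absence of infinite ascending chains, and the bijection is an order isomorphism. So every bi-sieve is finitely generated, and Theorem~\ref{thm:3.1} gives fg = fp.

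There is no real obstacle here. The only care needed is the bookkeeping showing that the bijection of Example~\ref{exa:2.6}(1) respects generators, which is the identity $\langle f,f'\rangle u=\langle fu,f'u\rangle$.
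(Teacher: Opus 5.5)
Your proposal is correct and follows the paper's argument exactly: the paper derives the corollary from Example~\ref{exa:2.6}(1) (bi-sieves on $(a,a')$ are sieves on $a\times a'$) together with the two implications of Corollary~\ref{cor:*3.2}. Your check that this correspondence preserves generating sets, via $\langle f,f'\rangle u=\langle fu,f'u\rangle$, supplies a detail the paper leaves implicit.
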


This follows from Example \ref{exa:2.6}(1) and Corollary \ref{cor:*3.2}.
}

{\color{black}
\begin{example}\label{exa:*3.4}
The category of finitary set functors  on $\Set$ satisfies fg=fp. For $\mathcal{F}$ the category of finite sets, finitary  endofunctors are equivalent to $\Set^{\mathcal{F}}$, presheaves on $\mathcal{F}^{\op}$. Every sieve $S$ on an $n$-element set $a$ in  $\mathcal{F}^{\op}$ is determined by the set $\{\text{ker} f; \, f\in S\}$ of equivalence relations on $a$ (for all $f \colon a\to b$ in $S$). Thus, there are only finitely many sieves on $a$.
\end{example}
}

\begin{corollary}\label{cor:3.8} If $\cata$ is a poset, then fg=fp holds for presheaves iff sieves in $\cata$ are finitely generated.
\end{corollary}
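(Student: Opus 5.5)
The plan is to prove both directions using Corollary~\ref{cor:*3.2}: necessity comes from its right-hand implication, and sufficiency from its left-hand implication (Theorem~\ref{thm:3.1}). The key observation is that in a poset, bi-sieves reduce to sieves.

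\emph{Necessity.} If fg = fp holds for presheaves on $\cata$, then all sieves are finitely generated by Corollary~\ref{cor:*3.2}. This direction needs no assumption on $\cata$.

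\emph{Sufficiency.} Assume every sieve in the poset $\cata$ is finitely generated. By Theorem~\ref{thm:3.1}, it suffices to show that every bi-sieve is finitely generated. Fix objects $a, a'$. Since $\cata$ is a poset, a cone with codomain $(a,a')$ and vertex $b$ exists iff $b \le a$ and $b \le a'$, and it is then unique. So a cone is determined by its vertex.

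Now let $S$ be a bi-sieve on $(a,a')$. Let $S_a$ be the collection of first legs $b \to a$ of cones in $S$. Closure of $S$ under precomposition says exactly that the set of vertices is down-closed. Hence $S_a$ is a sieve on $a$. By hypothesis, $S_a$ is generated by finitely many morphisms $b_1 \to a, \dots, b_n \to a$, where each $b_i$ is a vertex of some cone in $S$. Take the corresponding cones $(b_i \to a,\, b_i \to a')$ in $S$.

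These cones generate $S$. Given a cone in $S$ with vertex $b$, we have $b \to a$ in $S_a$, so $b \le b_i$ for some $i$. Precomposing the $i$-th cone with the unique morphism $b \to b_i$ gives a cone with vertex $b$, and by uniqueness of cones this is the given cone. Thus $S$ is finitely generated, and Theorem~\ref{thm:3.1} gives fg = fp.

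I do not expect a real obstacle. The only point needing care is that, because hom-sets are at most singletons, a cone is determined by its vertex. This is what identifies bi-sieves on $(a,a')$ with the sieves on $a$ consisting of elements lying below $a'$.
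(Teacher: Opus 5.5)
Your proposal is correct and follows essentially the same route as the paper. Necessity comes from Corollary~\ref{cor:*3.2}. For sufficiency, in a poset a bi-sieve on $(a,a')$ is just a down-closed subset of $\down a \cap \down a'$, hence a sieve on $a$ whose finite set of generators also generates the bi-sieve, so Theorem~\ref{thm:3.1} applies.
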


Indeed, whenever sieves are finitely generated, then so are bi-sieves.
A sieve on $a$ is a down-closed subset of the down-set $\down a$, and a bi-sieve $S$ on $(a,a')$ is a down-closed subset of the intersection
$\down a$ and $\down a'$. Thus, $S$ is a sieve on $a$, and a set of generators of
that sieve also generates the bi-sieve $S$.

In Section 6 we prove a nicer condition for fg=fp: the poset contains no bounded infinite  ascending chain or anti-chain (Theorem~\ref{thm:6.1}).

{\color{black}
\begin{theorem}\label{thm:*3.5}
Let $\mathcal{A}$ be a small category such that for every pair of objects the poset of all bi-sieves contains no infinite chain. Then the presheaf category is DCC.
\end{theorem}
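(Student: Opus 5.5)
The plan is to verify the two clauses of the DCC definition for a finitely presentable presheaf $A$, using that "no infinite chain" in the bi-sieve posets gives both the ascending and the descending chain conditions.

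\textbf{Finite presentability of subobjects and quotients.} The absence of infinite ascending chains gives, by Lemma~\ref{lem:2.9}, that all bi-sieves are finitely generated. Theorem~\ref{thm:3.1} then yields fg = fp. By Proposition~\ref{pro:fg=fp}, finitely presentable presheaves are closed under subobjects and quotients, which settles the first half of a.\ and of b.

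\textbf{Descending chains of subobjects.} Write $A$ as a quotient $\epsilon\colon H\twoheadrightarrow A$ with $H=\coprod_{i<n}\cata(-,a_i)$, using Remark~\ref{rem:2.1}(3). A descending chain of subobjects $M_0\supsetneq M_1\supsetneq\cdots$ of $A$ pulls back along $\epsilon$ to a strictly descending chain $\epsilon^{-1}M_k$ of subobjects of $H$; strictness holds because $\epsilon$ is epic objectwise, so $\epsilon(\epsilon^{-1}M)=M$. A subobject of $H$ is a coproduct of sieves $S_i$ on the $a_i$, and by Example~\ref{exa:2.6}(3) each sieve is a bi-sieve on $(a_i,a_i)$. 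So one of the finitely many components would carry an infinite strictly descending chain of sieves, hence of bi-sieves, which contradicts the hypothesis.

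\textbf{Descending chains of quotients.} By Convention~\ref{con:con1}, a descending chain of quotients $A\twoheadrightarrow Q_0,\ Q_1,\dots$ with $Q_{k+1}\le Q_k$ strict means that the kernel congruences increase strictly from $Q_{k}$ to $Q_{k+1}$... Care is needed with the direction here: $e\le e'$ means $e'$ factors through $e$, so $e'$ is coarser. A descending chain is therefore one getting finer, with kernel pairs strictly \emph{decreasing}. Composing with $\epsilon$, each $Q_k$ is a quotient of $H$. Its kernel pair $K_k$ is a subobject of $H\times H$, and distinct quotients have distinct kernel pairs, since quotients are coequalizers of their kernel pairs (Remark~\ref{rem:2.1}(5)). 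We thus get a strictly descending chain of subobjects of $H\times H=\coprod_{i,j}\cata(-,a_i)\times\cata(-,a_j)$. Each subobject decomposes as a finite coproduct of bi-sieves on the pairs $(a_i,a_j)$, so some pair $(a_i,a_j)$ carries an infinite strictly descending chain of bi-sieves, again a contradiction. The same argument in the ascending direction covers the other ordering convention, in case it is needed.

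The main obstacle is bookkeeping rather than substance. One point is the direction convention for quotients. The other is checking that a strictly monotone chain in a finite product of posets has a strictly monotone infinite subchain in one coordinate; this follows by a pigeonhole or Ramsey argument, since each strict step is strict in some coordinate and non-strict in none.
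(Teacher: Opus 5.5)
Your proposal is correct and follows the paper's proof: fg $=$ fp via Lemma~\ref{lem:2.9} and Theorem~\ref{thm:3.1}; subobjects handled by pulling back along an epimorphism from a finite coproduct of representables; quotients handled through their kernel pairs, viewed as subobjects of $H\times H$ and decomposed into bi-sieves. The only differences are minor: you compose the quotients with $\epsilon$ directly rather than first treating coproducts of representables, and you make explicit the pigeonhole step over the finitely many coproduct components, which the paper leaves implicit.
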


\begin{proof}
\begin{enumerate}
    \item[a.] Every bi-sieve on a pair $(a, a')$ is finitely generated (Lemma~\ref{lem:2.9}), thus $\text{fg} = \text{fp}$ holds for presheaves (Theorem~\ref{thm:3.1}).

    \item[b.] Let $G$ be a finitely presentable presheaf. To prove that every descending chain of subobjects is finite, consider first $G = \mathcal{A}(-, a)$. Since the poset of subobjects of $G$ is isomorphic to that of all sieves on $a$, and this is a subposet of all bi-sieves on $(a, a)$, we see that $G$ has no infinite descending chain of subobjects. And the same holds for finite coproducts of representables.

    In general, $G$ is a quotient of a finite coproduct of representables. But in $\Set$, then also in $\Set^{\cata^{\op}}$, the pullback functor along an epimorphism $H\twoheadrightarrow G$ determines an injective map from the poset of subobjects of $G$ into the poset of subobjects of $H$. Then $G$ has the desired property.

    \item[c.] Analogously, $G$ has no infinite descending chain of quotients. To prove this, first observe that for every coproduct
    $$G = \coprod_{i < n} \cata(-, a_i)$$
     the presheaf $G \times G$ does not have any infinite descending chain of subobjects. Indeed, we have
$$G \times G = \coprod_{i, j < n} \cata(-, a_i) \times \cata(-, a_j).$$

Every subobject $H$ of $G \times G$ is a coproduct $H = \coprod_{i, j < n} H_{ij}$ of subobjects $H_{ij}$ of $\cata(-, a_i) \times \cata(-, a_j)$. That is, a coproduct  of $n^2$ bi-sieves.  Since no bi-sieve has an infinite descending chain of subobjects, the same is true about chains of subobjects of $G \times G$.

We are ready to prove that $G$ above does not have any infinite descending chain of quotients. It then follows that every finitely generated presheaf (a quotient of some $G$ as above) shares that property.

Let $\epsilon_n \colon G \to F_n\;\,  (n < \omega)$ be quotients with $\epsilon_{n+1} \le \epsilon_n$ for every $n$. Form the kernel  pairs
\[\begin{tikzcd}
	{H_n} & G & {F_n}
	\arrow["{\psi_n}", shift left=2, from=1-1, to=1-2]
	\arrow["{\varphi_n}"', shift right=2, from=1-1, to=1-2]
	\arrow["{\epsilon_n}", from=1-2, to=1-3]
\end{tikzcd}\quad (n < \omega).\]
Then $\langle \varphi_n, \psi_n \rangle \colon H_n \to G \times G$ is a subobject, and from $\epsilon_{n+1} \le \epsilon_n$, it follows that
$$\langle \varphi_{n+1}, \psi_{n+1} \rangle \le \langle \varphi_n, \psi_n \rangle$$
But all these subobjects $\langle \varphi_n, \psi_n \rangle$ cannot form an infinite descending chain. Consequently, there exists $k < \omega$ such that $\langle \varphi_k, \psi_k \rangle$ represents the same subobject as $\langle \varphi_{k+1}, \psi_{k+1} \rangle$.
This implies that $\epsilon_k = \text{coeq}(\varphi_k, \psi_k)$ represents the same quotient as $\epsilon_{k+1}$. Thus, $G$ has no infinite descending chain of quotients.
\qedhere
\end{enumerate}
\end{proof}

}

{\color{black}
\begin{corollary}\label{cor:*3.6}
For every small category the following implications hold:
\[
\text{no infinite chains of bi-sieves} \implies \text{DCC presheaf category} \implies \text{no infinite chains of sieves}.
\]
\end{corollary}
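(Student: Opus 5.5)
The left-hand implication is exactly Theorem~\ref{thm:*3.5}, so nothing new is needed there. For the right-hand implication, assume $\Set^{\cata^{\op}}$ is DCC, fix an object $a$, and suppose there is an infinite chain $\mathcal{C}$ of sieves on $a$. The plan is to extract from $\mathcal{C}$ either an infinite descending chain or an infinite ascending chain, and to derive a contradiction in each case.

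First, the extraction. By Ramsey's theorem, or simply because an infinite totally ordered set contains a subset of order type $\omega$ or $\omega^{\op}$, we obtain a strictly increasing sequence $S_0\subset S_1\subset\cdots$ or a strictly decreasing sequence $S_0\supset S_1\supset\cdots$ of sieves on $a$.

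The descending case is immediate. By the Remark following Example~\ref{exa:2.4}, sieves on $a$ are precisely the subobjects of the representable $\cata(-,a)$, which is finitely presentable. A strictly decreasing sequence of sieves is therefore an infinite descending chain of subobjects of a finitely presentable object. This contradicts condition a.\ of the DCC definition.

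The ascending case needs more care and is the main step. Condition a.\ says every subobject of a finitely presentable object is finitely presentable, so in particular every sieve on $a$ is finitely presentable, hence finitely generated. Lemma~\ref{lem:2.9} then says the poset of sieves on $a$ has no infinite ascending chain, which gives the contradiction. Alternatively, one can use the union argument from part (1) of the proof of Lemma~\ref{lem:2.9} directly: the union of the $S_i$ is a finitely generated sieve, so its generators lie in some $S_i$, and the chain stabilizes.

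With both cases excluded, the poset of sieves on $a$ contains no infinite chain. The only subtle point is that DCC speaks only about descending chains, so the ascending case must come from the finite presentability of subobjects in condition a.\ combined with Lemma~\ref{lem:2.9}.
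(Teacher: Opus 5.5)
Your proof is correct and follows the paper's argument. The left implication is Theorem~\ref{thm:*3.5}; for the right one, descending chains of sieves are excluded by condition a.\ applied to the finitely presentable object $\cata(-,a)$, and ascending chains are excluded by Lemma~\ref{lem:2.9}, because condition a.\ makes every sieve finitely presentable and hence finitely generated. The paper leaves implicit both this last step and the reduction of an arbitrary infinite chain to one of type $\omega$ or $\omega^{\op}$, and you spell them out.
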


Indeed, the right-hand implication follows from sieves on $a$ being subobjects of the presheaf $\cata(-,a)$. Thus there are no infinite descending chains, whereas ascending chains are finite by Lemma~\ref{lem:2.9}.

None of the above implications can be reversed: in Exampl~\ref{exa:Tarski} a group $A$ is presented with DCC presheaves which is infinite (thus bi-sieves, i.e., subsets of $A$ forming infinite chains). Whereas the group $\mathbb{Z}$ of integers has only two sieves, but does not yield a DCC presheaf category.

\begin{corollary}\label{cor:*3.7}
Let $A$ be a small category with binary products. Presheaves form a DCC category iff sieves on any object do not form infinite chains.
\end{corollary}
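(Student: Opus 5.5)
The plan is to read the statement off Corollary~\ref{cor:*3.6}, using Example~\ref{exa:2.6}(1) to turn bi-sieves into sieves when $\cata$ has binary products.

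\emph{Necessity.} Suppose the presheaf category is DCC. This is the right-hand implication of Corollary~\ref{cor:*3.6}. Sieves on $a$ are exactly the subobjects of the finitely presentable presheaf $\cata(-,a)$, so descending chains of sieves are finite by condition a of the DCC definition. Condition a also says every subobject of $\cata(-,a)$ is finitely presentable, so every sieve is finitely generated. Lemma~\ref{lem:2.9} then rules out infinite ascending chains of sieves. An infinite chain in a poset contains either an infinite strictly ascending or an infinite strictly descending sequence (Ramsey's theorem, or the infinite-chain dichotomy). Hence no infinite chain of sieves exists.

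\emph{Sufficiency.} Suppose sieves on each object form no infinite chains. By Example~\ref{exa:2.6}(1), bi-sieves on $(a,a')$ correspond bijectively to sieves on $a\times a'$, via $(f,f')\mapsto\langle f,f'\rangle$. This correspondence preserves and reflects inclusion, so it is an order isomorphism of the two posets. Therefore the posets of bi-sieves contain no infinite chains either, and Theorem~\ref{thm:*3.5} gives that the presheaf category is DCC.

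The only point needing care is that the correspondence in Example~\ref{exa:2.6}(1) is really an order isomorphism. Closure under precomposition matches on both sides because $\langle f,f'\rangle\cdot u=\langle fu,f'u\rangle$, and inclusion is clearly preserved in both directions. I expect no serious obstacle; the rest is direct citation of earlier results.
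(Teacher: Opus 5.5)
Your proposal is correct and follows the paper's route: the corollary is read off from Corollary~\ref{cor:*3.6}, and Example~\ref{exa:2.6}(1) identifies bi-sieves on $(a,a')$ with sieves on $a\times a'$, so the bi-sieve hypothesis of Theorem~\ref{thm:*3.5} becomes a hypothesis on sieves. Your explicit check that this identification is an order isomorphism, and your remark that excluding infinite ascending and infinite descending sequences excludes every infinite chain, are details the paper leaves implicit.
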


A better criterion is presented in Theorem~\ref{thm:new5.1}.

\begin{theorem}\label{thm:3.4}
Let $\mathcal{A}$ be a small category such that every pair of objects carries finitely many bi-sieves. Then $\Set^{\mathcal{A}^{\text{op}}}$ is graduated.
\end{theorem}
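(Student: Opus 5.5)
Since finitely many bi-sieves on each pair means in particular that no infinite chains of bi-sieves exist, Theorem~\ref{thm:*3.5} already tells us that $\Set^{\cata^{\op}}$ is DCC and fg = fp holds. By Proposition~\ref{pro:fg=fp}, subobjects and quotients of finitely presentable presheaves are finitely presentable. So it only remains to define a grade. For a finitely presentable $X$ we want subobjects and quotients to have strictly smaller grade whenever they are proper. My plan is to define the grade as a length: the supremum of lengths of chains in a suitable finite poset attached to $X$. Then I would show that this poset is finite and that proper subobjects and quotients sit strictly inside it.

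\emph{Step 1: finiteness for finite coproducts of representables.} Let $H=\coprod_{i<n}\cata(-,a_i)$. Its subobjects are coproducts of sieves on the $a_i$, and sieves on $a$ embed in bi-sieves on $(a,a)$ (Example~\ref{exa:2.6}(3)), so $H$ has finitely many subobjects. Subobjects of $H\times H$ are coproducts of $n^2$ bi-sieves, so there are finitely many of them as well. Every quotient of $H$ is the coequalizer of its kernel pair, which is a subobject of $H\times H$, and distinct quotients have distinct kernel pairs. Hence $H$ has finitely many quotients too.

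\emph{Step 2: transfer to arbitrary finitely presentable $X$.} Choose an epimorphism $\epsilon\colon H\twoheadrightarrow X$ (Remark~\ref{rem:2.1}(3)). Pulling back along $\epsilon$ injects the subobjects of $X$ into those of $H$, as in the proof of Theorem~\ref{thm:*3.5}, and every quotient of $X$ is a quotient of $H$. So $X$ has finitely many subobjects and finitely many quotients. Define
\[
\mathrm{gr}(X)=s(X)+q(X),
\]
where $s(X)$ is the maximal length of a chain of subobjects of $X$ and $q(X)$ is the maximal length of a chain of quotients of $X$. Both are finite natural numbers, since the posets are finite, and both depend only on the isomorphism class of $X$.

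\emph{Step 3: strict decrease, which is the main point.} Let $m\colon M\rightarrowtail X$ be a proper subobject. Subobjects of $M$ are exactly the subobjects of $X$ below $m$, so any chain of subobjects of $M$ extends by $\id_X$, giving $s(M)<s(X)$. What remains is $q(M)\le q(X)$. Here I would map each quotient $e\colon M\twoheadrightarrow Q$ to the pushout of $e$ along $m$, which is a quotient of $X$. In presheaf categories, being objectwise like $\Set$, monomorphisms are stable under pushout. Hence $Q\to P$ is monic and $m$ can be recovered as a pullback, so $e$ is recovered from its pushout. This map is injective and monotone, and an injective monotone map between finite posets that also reflects strict order sends strict chains to strict chains. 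Therefore $q(M)\le q(X)$.

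Dually, let $e\colon X\twoheadrightarrow Q$ be a proper quotient. Quotients of $Q$ are the quotients of $X$ above $e$, so $q(Q)<q(X)$ by extending chains with $\id_X$. For subobjects, taking preimages along $e$ injects the subobjects of $Q$ into those of $X$ monotonically and strictly, because $e$ is epi and so $e^{-1}$ is injective on subobjects. This gives $s(Q)\le s(X)$.

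Combining the two cases gives strict decrease of $\mathrm{gr}$ in both, which proves graduatedness. The step I expect to be most delicate is the pushout argument $q(M)\le q(X)$: it needs stability of monos under pushout and the recovery of $e$ from its pushout, both of which I would check componentwise in $\Set$.
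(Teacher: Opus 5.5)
Your proposal is correct and follows the paper's proof essentially step by step. You show finiteness of subobjects and quotients for finite coproducts of representables via bi-sieves and kernel pairs, transfer it along an epimorphism, and compare grades via the pullback and pushout injections. The only difference is that you grade by maximal chain lengths rather than by the numbers $s(X)+q(X)$ of subobjects and quotients; this forces you to check that those injections are also monotone (they are, as you note), whereas the paper's counting needs injectivity alone.
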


\begin{proof}
\begin{enumerate}[(1)]
\item By Theorem \ref{thm:3.1}, $fg = fp$ holds. Indeed, given a bi-sieve $S$ on $(a,a')$, express it as a directed union of all finitely generated bi-sieves $S_i\subseteq S\; (i\in I)$. Then $I$ is both directed and finite. Hence, $S=S_i$ for some $i$.

    \item We first prove that every finitely presentable presheaf $G$ has finitely many subobjects. This holds if $G = \mathcal{A}(-, a)$, since subobjects correspond to sieves on $a$. Thus, this also holds for $G = \coprod_{i=0}^n \mathcal{A}(-, a_i)$ because a subobject is precisely an $n$-tuple of subobjects of $\mathcal{A}(-, a_i)$ for $i<n$.

    To prove that this holds in general, we can assume that a quotient
    $$\epsilon : \coprod_{i=0}^n \mathcal{A}(-, a_i) \twoheadrightarrow G$$
     is given (as $G$ is finitely generated). For every subobject $\mu\colon F\to G$,  we form the pullback of $\epsilon$ and $\mu$. In $\Set$, the pullback functor along a surjection $e\colon X\twoheadrightarrow Y$ determines an injective map from the set of all subobjects of $Y$ into the set of all subobjects of $X$;
   thus, the same holds in $\Set^{\mathcal{A}^{\text{op}}}$. Since $\coprod_{i=0}^n \mathcal{A}(-, a_i)$ has finitely many subobjects, so does $G$.

    \item Conversely, a presheaf $G$ with finitely many subobjects is finitely presentable. Indeed, express $G$ as a directed colimit $G = \text{colim}_{j \in J} G_j$ of the diagram of all finitely generated subobjects.

Since $G$ has only finitely many subobjects and $I$ is directed, $G=G_{j}$ for some $j$. By (1), $G$ is finitely presentable.

\item Every finitely presentable presheaf $G$ has only finitely many quotients. To prove this, use that  $G$ is a quotient of some $\coprod_{i=0}^{n-1} \mathcal{A}(-, a_i)$. Thus, arguing as in  (2), we only need to treat the case
$$G = \coprod_{i=0}^{n-1} \mathcal{A}(-, a_i)\, .$$

Given a quotient $\epsilon\colon G \twoheadrightarrow F$, form its kernel equivalence
\[\begin{tikzcd}
	H && {\coprod_{i=0}^{n-1} \mathcal{A}(-, a_i)} && F
	\arrow["{h'}"', shift right=2, from=1-1, to=1-3]
	\arrow["h", shift left=3, from=1-1, to=1-3]
	\arrow["\epsilon", two heads, from=1-3, to=1-5]
\end{tikzcd}.\]
Then $\epsilon$  is the coequalizer of $h$ and $h'$. Thus, we need to verify that there are only finitely many possible kernel pairs (up to isomorphism). Indeed,  we have a subobject
$$\langle h, h' \rangle : H \longrightarrow \coprod_{i,j=0}^{n-1} \mathcal{A}(-, a_i) \times \mathcal{A}(-, a_j).$$

Now observe that subobjects of $\cata(-,a_i)\times \cata(-,a_j)$ bijectively correspond to bi-sieves on $(a_i,a_j)$.

\item We are ready to prove that $\Set^{\cata^{\op}}$ is graduated. By Item (1) and Proposition \ref{pro:fg=fp}, we just need to define the grades. For every finitely presentable presheaf $X$ let $s(X)$ and $q(X)$ be the number of subobjects and quotients, respectively, and put
\[\text{grade} \, X=s(X)+q(X)\, .\]
If $m \colon Y \hookrightarrow X$ is a proper suboject of $X$, then  $s(Y) < s(X)$, and we verify $q(Y) \leq q(X)$. For that, observe that in $\Set$, thus also in $\Set^{\mathcal{A}^{\op}}$, we have the dual of the property mentioned in Item (2): the pushout of a quotient $e\colon Y\twoheadrightarrow Q$ along a monomorphism $m\colon Y\hookrightarrow X$  is a quotient and the pushout functor induced by $m$ gives an injective map from the set of quotients of  $Y$ into the set of the quotients of $X$. Thus $q(Y)\leq q(X)$.

 Finally, if $\epsilon \colon X \twoheadrightarrow Y$ is a proper quotient, then $q(Y) < q(X)$. Again, we just need to verify that  $s(Y) \leq s(X)$ by forming pullbacks of  monomorphisms $\mu \colon Y' \hookrightarrow Y$ along $\epsilon$. \qedhere
\end{enumerate}
\end{proof}

\begin{corollary}\label{cor:*3.9}
For every small category $\cata$ the following implications hold:
\[
\begin{array}{c}
\text{finitely many bi-sieves on every pair of objects} \\
\Downarrow \\
\text{graduated presheaf category} \\
\Downarrow \\
\text{finitely many sieves on every object}
\end{array}
\]
\end{corollary}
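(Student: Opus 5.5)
The upper implication is exactly Theorem~\ref{thm:3.4}, so nothing new is needed there. The work is in the lower implication: if $\Set^{\cata^{\op}}$ is graduated, then every object $a$ carries only finitely many sieves. The plan is to use the representable presheaf $\cata(-,a)$, which is finitely presentable, and show that its subobjects (equivalently, by the Remark following Example~\ref{exa:2.4}, the sieves on $a$) are finite in number.

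First, by the definition of a graduated category, every subobject of $\cata(-,a)$ is finitely presentable and carries a grade. Any strictly ascending chain of subobjects $S_0\subsetneq S_1\subsetneq\cdots\subsetneq S_k$ of $\cata(-,a)$ yields strictly increasing grades, since each $S_i$ is a proper subobject of the finitely presentable $S_{i+1}$. Hence every chain of sieves on $a$ has length at most $\mathrm{grade}\,\cata(-,a)+1$. In particular there are no infinite chains, so by Lemma~\ref{lem:2.9} every sieve is finitely generated. This bounded-height observation is the routine part.

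Bounded height alone does not give finiteness, since an infinite antichain of sieves is not yet excluded. This is the main obstacle. My first attempt is to exploit strong quotients as well as subobjects. Given sieves $S\ne S'$ on $a$, form the quotient $\cata(-,a)+\cata(-,a)\twoheadrightarrow Q_S$ obtained by gluing the two copies of $\cata(-,a)$ along $S$, i.e.\ the pushout of $S\rightarrowtail\cata(-,a)$ with itself. As in the proof of Proposition~\ref{pro:fg=fp}, this pushout is also a pullback, so $S$ is recovered from $Q_S$ as the intersection of the two copies. Hence $S\mapsto Q_S$ is an injective, order-preserving map from sieves on $a$ into quotients of $P=\cata(-,a)+\cata(-,a)$. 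This alone still gives only bounded chains.

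The step I would rely on is therefore to consider unions of several sieves, aiming for a strictly ascending chain of unbounded length. Suppose $(S_n)_{n<\omega}$ are pairwise distinct sieves on $a$. By the bounded height, and a Ramsey-type argument on the finite-height poset of sieves, I would extract an infinite antichain. Then I would check whether the finite unions $S_0\cup\cdots\cup S_n$ can be made strictly increasing, by choosing each $S_{n+1}\not\subseteq S_0\cup\dots\cup S_n$. This is possible as long as the antichain is not covered by finitely many of its members. If that fails, then every $S_n$ lies below a fixed finite union $T$, and I would recurse inside the sieves contained in $T$. By bounded height the recursion depth is at most the grade of $\cata(-,a)$, so eventually a strictly increasing chain of unions of length exceeding $\mathrm{grade}\,\cata(-,a)+1$ appears, a contradiction. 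Making this recursion precise is where I expect the real difficulty. If it does not go through, I would fall back on an analogous argument with quotients of $P$ via the $Q_S$ construction.
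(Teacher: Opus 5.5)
\textbf{There is a genuine gap in the lower implication.} Your treatment of the upper implication (Theorem~\ref{thm:3.4}) is correct, and so is your bound of every chain of sieves on $a$ by $\mathrm{grade}\,\cata(-,a)$. The gap is the decisive step, which you leave open, and the recursion you sketch for it does not work. Suppose every member of your antichain lies below a finite union $T$ of some of them. Then ``recursing inside the sieves contained in $T$'' gains nothing, because $T$ may be the maximal sieve. No height decreases, so your claimed bound on the recursion depth is unfounded.

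More fundamentally, your argument uses only bounded height and joins, and these cannot suffice. Take a lattice with a bottom, a top and infinitely many pairwise incomparable elements in between. It is infinite, it has no chain with more than three elements, and the join of any two middle elements is already the top, which is exactly where your recursion stalls. The fallback through $Q_S$ only transports the same bounded-height information to quotients of $\cata(-,a)+\cata(-,a)$, so it meets the same obstruction. You have located the real issue correctly, though. The paper's own argument says the chain $S_0\subseteq S_0\cup S_1\subseteq\cdots$ has length at most the grade, ``thus the $S_n$ are not pairwise distinct''. That skips exactly this point: the chain can stabilize, e.g.\ when $S_0$ is the maximal sieve, while the later $S_n$ stay distinct.

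\textbf{The missing ingredient is a property specific to sieves.} Every sieve $S$ on $a$ is the union of the principal sieves $\langle f\rangle=\{f\cdot u\}$ with $f\in S$. Principal sieves are join-prime: $\langle f\rangle\subseteq\bigcup_i S_i$ forces $f\in S_i$, hence $\langle f\rangle\subseteq S_i$, for some $i$. This gives two cases.
\begin{enumerate}[(i)]
\item If there are only $m$ distinct principal sieves on $a$, each sieve is determined by the principal sieves it contains, so there are at most $2^m$ sieves.
\item Otherwise the infinitely many principal sieves form a poset of finite height. By Mirsky's theorem (or Ramsey, as you suggest) they contain an infinite antichain $\langle f_0\rangle,\langle f_1\rangle,\dots$. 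Since $f_{n+1}\notin\langle f_0\rangle\cup\dots\cup\langle f_n\rangle$, these finite unions form an infinite strictly ascending chain of subobjects of $\cata(-,a)$. This contradicts the grade bound.
\end{enumerate}

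\textbf{Alternatively, your recursion can be repaired,} but it must descend into one of the members rather than into $T$. Given infinitely many distinct sieves $S_n\subsetneq U$, greedily form $T=S_{i_1}\cup\dots\cup S_{i_m}$ containing all $S_n$; bounded height makes this stop. By distributivity, $S_n=\bigcup_j(S_n\cap S_{i_j})$. By pigeonhole there is a $j$ for which the sieves $S_n\cap S_{i_j}$ are infinitely many distinct sieves below $S_{i_j}\subsetneq U$. Induction on the height of $U$ then gives the contradiction.
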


To verify the second implication, let $S_n$ ($n \in \mathbb{N}$) be a collection of sieves on $a$. Let $k$ be the grade of $\cata(-,a)$. The following chain of subobjects of $\cata(-,a)$
\[
S_0, \, S_0 \cup S_1, \, S_0 \cup S_1 \cup S_2, \dots
\]
has length at most $k$. Thus the sieves $S_n$ are not pairwise distinct.

None of the implications above can be reversed: Use the group $\mathbb{Z}$ again for the second implication. For the first one take the infinite group with graduated presheaf category in Example~\ref{exa:Tarski}(2).

}

\begin{corollary}\label{cor:3.13} If $\cata$ is a small category with binary products, then presheaves are graduated iff every object carries finitely many sieves.

\end{corollary}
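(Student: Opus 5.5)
The plan is to sandwich the statement between the two implications of Corollary~\ref{cor:*3.9}, using Example~\ref{exa:2.6}(1) to identify bi-sieves with sieves when $\cata$ has binary products.

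For the necessity direction there is nothing new to prove. If $\Set^{\cata^{\op}}$ is graduated, then the second implication of Corollary~\ref{cor:*3.9} gives that every object carries finitely many sieves. This direction does not use binary products at all.

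For sufficiency, assume every object of $\cata$ carries finitely many sieves. We aim to verify the hypothesis of Theorem~\ref{thm:3.4}: every pair $(a,a')$ carries finitely many bi-sieves. By Example~\ref{exa:2.6}(1), bi-sieves on $(a,a')$ correspond bijectively to sieves on $a\times a'$, via $(f,f')\mapsto\langle f,f'\rangle$. Since $a\times a'$ is an object of $\cata$, it carries only finitely many sieves, and so $(a,a')$ carries only finitely many bi-sieves. Theorem~\ref{thm:3.4} then yields that the presheaf category is graduated.

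The only point needing care is that the correspondence of Example~\ref{exa:2.6}(1) is genuinely a bijection that respects closure under precomposition. This holds because $\langle f,f'\rangle\cdot u=\langle fu,f'u\rangle$ for every morphism $u$, and because every morphism into $a\times a'$ is of the form $\langle f,f'\rangle$ for a unique cone $(f,f')$. Once this is checked, the corollary follows formally.
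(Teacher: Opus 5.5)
Your proposal is correct and is exactly the argument the paper intends; it gives no separate proof, and the corollary is read off from the two implications of Corollary~\ref{cor:*3.9} together with the identification of bi-sieves on $(a,a')$ with sieves on $a\times a'$ from Example~\ref{exa:2.6}(1). One caveat, since your necessity direction rests entirely on the second implication of Corollary~\ref{cor:*3.9}: the paper's justification there (the chain $S_0\subseteq S_0\cup S_1\subseteq\cdots$) only bounds the length of chains of sieves on $a$ by the grade of $\cata(-,a)$, so concluding finiteness also needs that the sieve lattice is distributive (a distributive lattice of finite length is finite); the cited implication is nevertheless true.
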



\section{Presheaves on groups and groupoids}

Presheaves on a group $G$ are proved to satisfy
\begin{enumerate}
    \item $fg = fp$ iff $G$ is \emph{Noetherian}: every ascending chain of subgroups is finite. Equivalently, all subgroups of $G$ are finitely generated.
    \item DCC iff every chain of subgroups is finite.
    \item Graduation iff a finite upper bound on the length of a chain of subgroups exists.
\end{enumerate}

Results about presheaves on groupoids are proved to be analogous.

\begin{notation}
We denote by
\[
G\text{-}\Set
\]
the category of sets $X$ equipped with a right action of $G$. That is, a map $X \times G \to X$, $(x,g) \mapsto xg$, is given satisfying $xe = x$  and $(xg)g' = x(gg')$ for all $x \in X$ and $g, g' \in G$.
 Morphisms $f: X \to Y$ are the equivariant maps:
\[
f(xg) = f(x)g \quad \text{for all } x \in X \text{ and } g \in G.
\]
This category is equivalent to the presheaf category $\Set^{G^{\op}}$: every presheaf $F\colon G^{\op}\to \Set$ assigns to the unique object a set $X$, and to every element $g \in G$ an automorphism of $X$ (denoted by $x\mapsto xg$). Natural transformations then correspond to equivariant maps.
\end{notation}

An important example of a $G$-set is $G$ itself (the unique hom-functor from $G^{\op}$).

\begin{lemma}\label{lem:4.3}
The poset of all quotients of the $G$-set $G$ is isomorphic to the poset of all subgroups of $G$.
\end{lemma}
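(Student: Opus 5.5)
We prove Lemma~\ref{lem:4.3} by describing quotients of the $G$-set $G$ through their kernel equivalences and matching those with subgroups. Here $G$ acts on itself by right multiplication, and by Remark~\ref{rem:2.1}(5) every quotient is strong, hence represented by a surjective equivariant map. Two such maps represent the same quotient iff they have the same kernel equivalence. Consequently, quotients of $G$ correspond bijectively to \emph{invariant} equivalence relations $\sim$ on $G$, meaning those satisfying $x\sim y \Rightarrow xg\sim yg$. Each such relation arises as the kernel of $G\twoheadrightarrow G/{\sim}$, with the induced action $[x]g=[xg]$.

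The key step is a bijection between invariant equivalences and subgroups. Given $\sim$, put $H=\{h\in G \mid h\sim e\}$. Invariance gives the following.
\begin{itemize}
\item $x\sim y$ iff $xy^{-1}\sim e$: apply $g=y^{-1}$ in one direction and $g=y$ in the other.
\item $H$ is a subgroup. Clearly $e\in H$. If $h\sim e$, then multiplying by $h^{-1}$ gives $e\sim h^{-1}$. If $h,k\in H$, then $hk\sim k\sim e$, using $h\sim e$ multiplied by $k$.
\end{itemize}
Conversely, for a subgroup $H$ define $x\sim_H y$ iff $xy^{-1}\in H$. This is an equivalence relation. It is invariant because $(xg)(yg)^{-1}=xy^{-1}$. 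Moreover $\{h \mid h\sim_H e\}=H$, and the first bullet shows that $\sim$ equals $\sim_{H}$. So the two assignments are mutually inverse. Concretely, the quotient attached to $H$ is the coset $G$-set $H\backslash G$ of right cosets $Hx$, with action $Hx\cdot g=Hxg$.

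Next we check that the orderings match, using Convention~\ref{con:con1}. For quotients $e_1\colon G\twoheadrightarrow Q_1$ and $e_2\colon G\twoheadrightarrow Q_2$, we have $e_1\le e_2$ iff $e_2$ factors through $e_1$, iff $\ker e_1\subseteq\ker e_2$. The factorization is automatically equivariant, since $e_1$ is a surjective equivariant map. In terms of subgroups, $\sim_{H_1}\subseteq\sim_{H_2}$ iff $H_1\subseteq H_2$. Hence the bijection is an order isomorphism $H\mapsto (G\twoheadrightarrow H\backslash G)$.

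The whole argument is routine. The only points needing care are the direction of the ordering and the side on which cosets are taken, both dictated by the right action.
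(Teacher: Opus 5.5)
Your proof is correct and is essentially the paper's argument: the paper likewise sets $H=\{h\in G \mid f(h)=f(e)\}$ for a quotient map $f$, and identifies $f$ with the right-coset quotient $G\to G/H$ via $Hg\mapsto f(g)$. Injectivity there comes from exactly your observation $f(g_1)=f(g_2)\Rightarrow f(g_1g_2^{-1})=f(e)$, and the orders are compared through the factorization triangle of Convention~\ref{con:con1}. The only difference is that you route everything through invariant equivalence relations (kernel pairs), which also makes explicit the subgroup verification and the well-definedness of the comparison map that the paper treats as clear.
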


\begin{proof}
\begin{enumerate}[(1)]
    \item If $H \subseteq G$ is a subgroup, we define a quotient
    \[
    q : G \to G/H
    \]
    as follows. The elements of $G/H$ are the subsets $Hg = \{hg \mid h \in H\}$ for all $g \in G$. The action of $G$ on $G/H$ is by
    \[
    (Hg)g_0 = H(gg_0) \quad \text{for all } g_0 \in G.
    \]
    The quotient map $q$ takes $g$ to $Hg$.

    \item Given a quotient map $f : G \to X$ in $G\text{-}\Set$, we obtain the subgroup $H \subseteq G$ of all $h \in G$ with $f(h) = f(e)$. The corresponding map $q : G \to G/H$ represents the same quotient as $f$. Indeed, we have the equivariant map $i : G/H \to X$ given by
    \[
    i(Hg) = f(g) \quad (g \in G).
    \]
    This is  clearly well-defined. Moreover, it is an isomorphism  because it is surjective (since $f$ is) and injective: $f(g_1) = f(g_2)$ implies $f(g_1 g_2^{-1}) = f(e)$, thus $Hg_1 = Hg_2$. And we have the following commutative triangle:
   \[\begin{tikzcd}
	& G & \\
	{G/H} && X
	\arrow[""{name=0, anchor=center, inner sep=0}, "q"', from=1-2, to=2-1]
	\arrow[""{name=1, anchor=center, inner sep=0}, "f", from=1-2, to=2-3]
	\arrow["i"', from=2-1, to=2-3]
	\arrow["\sim"', shift right=3, draw=none, from=0, to=1]
\end{tikzcd}\]

\item The mapping
\(
H \mapsto G/H
\)
is an isomorphism from the poset of subgroups of $G$ to the poset of quotients of $G$. Indeed, given subgroups $H$ and $H'$, then for the quotients $q \colon G \to G/H$ and $q' \colon G \to G/H'$ we have (following Convention \ref{con:con1}) that
\[
H \subseteq H' \iff q \le q'\, .
\]
From $H \subseteq H'$ we get a canonical quotient map $\bar{q}\colon G/H \to G/H'$ given by $\bar{q}(Hg) = H'g$, and the triangle below commutes:
\[\begin{tikzcd}
	& G & \\
	{G/H} && {G/H'}
	\arrow["q"', from=1-2, to=2-1]
	\arrow["{q'}", from=1-2, to=2-3]
	\arrow["{\bar{q}}"', from=2-1, to=2-3]
\end{tikzcd}\]
Thus, $q \le q'$. Conversely, let $\bar{q}$ be an equivariant map making the above diagram commutative. Then $H \subseteq H'$ since  $\bar{q}(H) = H'$.   \qedhere
\end{enumerate}
\end{proof}

\begin{lemma}\label{lem:4.4}
Every finitely generated $G$-set is a coproduct of finitely many quotients of the $G$-set $G$.
\end{lemma}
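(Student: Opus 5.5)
The plan is to decompose a finitely generated $G$-set $X$ into its orbits and identify each orbit with a quotient of $G$ via Lemma~\ref{lem:4.3}. By Remark~\ref{rem:2.1}(2), translated to $G\text{-}\Set$, there is a finite set of elements $x_0,\dots,x_{n-1}\in X$ such that every $x\in X$ has the form $x=x_i g$ for some $i<n$ and $g\in G$. For each $x\in X$ let its orbit be $xG=\{xg\mid g\in G\}$.

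First I will check that two orbits are either equal or disjoint. If $x_1 g_1 = x_2 g_2$, then $x_1 = x_2 g_2 g_1^{-1}$, so $x_1G\subseteq x_2G$, and symmetrically $x_2G\subseteq x_1G$. Each orbit is closed under the action, so it is a sub-$G$-set of $X$.

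Next I will use that coproducts in $G\text{-}\Set$, like all colimits in the presheaf category, are computed as in $\Set$: they are disjoint unions with the induced action. So $X$ is the coproduct of its distinct orbits. The orbits $x_iG$ ($i<n$) cover $X$ by the choice of generators. Discarding repetitions leaves finitely many distinct orbits, so $X\cong\coprod_{j<k} x_{i_j}G$.

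Finally, for each orbit $x_iG$, the map $f\colon G\to x_iG$ given by $g\mapsto x_i g$ is equivariant, since $f(gg_0)=x_i(gg_0)=(x_ig)g_0=f(g)g_0$. It is surjective by the definition of the orbit. Hence $x_iG$ is a quotient of the $G$-set $G$; by Lemma~\ref{lem:4.3} it is, more concretely, isomorphic to $G/H_i$ with $H_i=\{h\mid x_ih=x_i\}$ the stabilizer.

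I do not expect a real obstacle here. The only point needing care is the identification of coproducts in $G\text{-}\Set$ with disjoint unions, which follows from colimits in presheaf categories being pointwise.
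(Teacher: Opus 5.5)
Your proof is correct and is essentially the paper's argument: both write $X$ as the disjoint union of the orbits $x_iG\cong G/H_i$ of a finite generating set, where $H_i$ is the stabilizer of $x_i$. The only difference is bookkeeping: the paper takes a minimal generating set and uses minimality to show that distinct generators have disjoint orbits, while you use that orbits are equal or disjoint and discard repetitions, which amounts to the same thing.
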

\begin{proof}
Let $x_0, \dots, x_{k-1}$ be  a minimal
set of generators of a $G$-set $X$. For every $i<k$ the set
$$H_i=\{g\in G; \, x_ig=x_i\}$$
is a subgroup of $G$: $g\in H_i$ implies $g^{-1}\in H_i$, and, given $g,h\in G$,
$x_i(gh)=(x_ig)h=x_ih=x_i.$
We have a mapping
$$u\colon \coprod_{i<k}G/H_i\to X$$
taking the class $H_ig$ of $g\in G$ modulo $H_i$ to
$$u(H_ig)=x_ig.$$
This is well defined: $H_ig=H_ig'$ implies $g(g')^{-1}\in H_i$, that is $x_ig(g')^{-1}=x_i$, hence $x_ig=x_ig'$. And it clearly is equivariant. Since $\{x_i\}$ is a set of generators, $u$ is surjective.

To prove that $u$ is an isomorphism, we first observe that its restriction to each $G/H_i$ is monic: if $x_ig=x_ih$, then $gh^{-1}\in H_i$, thus $H_ig=H_ih$. Therefore, we just need to prove for $i\not= j$  that, given $H_ig\in G/H_i$ and $H_jh\in G/H_j$, we have $x_ig\not=x_jh$. Assuming the contrary, we show that $x_i$ can be deleted from the set of generators of $X$  --- this contradicts the minimality of $k$. Indeed, from $x_ig=x_jh$ we get $x_i=x_j(hg^{-1})$. Thus every element of $X$ of the form $x_ik$, $k\in G$, has also the form $x_jk'$ for $k'=hg^{-1}k$.
\end{proof}

\begin{theorem}\label{thm:4.5}
For every group $G$ the following conditions are equivalent:
\begin{enumerate}[(1)]
\item fg=fp: every finitely generated $G$-set is finitely presentable.
\item $G$ is Noetherian.
\end{enumerate}
\end{theorem}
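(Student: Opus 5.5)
We prove the two implications separately, working throughout in $G$-$\Set$. By Lemma~\ref{lem:4.4}, every finitely generated $G$-set is a finite coproduct of quotients $G/H_i$. Finite coproducts of finitely presentable objects are finitely presentable, and the finitely generated $G$-sets are exactly these coproducts. Hence fg=fp reduces to the claim that each cyclic $G$-set $G/H$ is finitely presentable.

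For (2)$\Rightarrow$(1), let $G$ be Noetherian and $H\subseteq G$ a subgroup. Then $H$ is generated by finitely many elements $h_1,\dots,h_k$; we take the standard argument that Noetherian (no infinite ascending chain) is equivalent to all subgroups being finitely generated, exactly as in Lemma~\ref{lem:2.9}. We show that the quotient $q\colon G\to G/H$ is the coequalizer of two morphisms $\coprod_{j\le k} G\rightrightarrows G$. The first sends the generator $e$ of the $j$-th summand to $e$; the second sends it to $h_j$. Equivariant maps out of $G$ are determined by the image of $e$, so these morphisms exist. Their coequalizer is the quotient of $G$ by the smallest equivariant equivalence relation $\sim$ with $h_j g\sim g$ for all $g$. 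The classes of $\sim$ are the orbits of left multiplication by the subgroup generated by the $h_j$, which is $H$. So the coequalizer is $G/H$, a finite colimit of representables, and $G/H$ is finitely presentable.

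For (1)$\Rightarrow$(2), assume fg=fp and let $H_0\subseteq H_1\subseteq\cdots$ be an ascending chain of subgroups with union $H$. By Lemma~\ref{lem:4.3}, the quotients $G/H_n$ form an ascending chain of quotients of $G$, linked by the canonical maps $G/H_n\to G/H_m$. We claim their colimit is $G/H$. Colimits in $G$-$\Set$ are computed in $\Set$, and the directed colimit of the quotient maps identifies $g,g'$ exactly when $g'g^{-1}\in H_n$ for some $n$, that is, when $g'g^{-1}\in H$. Since $G/H$ is finitely generated, it is finitely presentable by (1). Then $\mathrm{id}_{G/H}$ factors through some colimit injection $G/H_n\to G/H$ via a map $s\colon G/H\to G/H_n$. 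Equivariance forces $s(H)=H_n g_0$ with $H_n g_0$ fixed by every $h\in H$. Then $g_0 h g_0^{-1}\in H_n$, so $H\subseteq g_0^{-1}H_n g_0$. The composite with the colimit map must send $H_n g_0$ to $H$, so $g_0\in H$, and hence $g_0^{-1}H_n g_0\subseteq H$.

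The main obstacle is that conjugates of $H_n$ appear here, so the factorization alone does not give $H=H_n$ directly. We sidestep this by replacing $s$ with the map $H g\mapsto H_n g_0 g$ composed appropriately, or more simply by conjugating the whole chain by $g_0\in H$. This gives $H = g_0^{-1}H_n g_0$, so $H$ is a conjugate of $H_n$ by an element of $H$. Hence $H_n = g_0 H g_0^{-1} = H$ because $g_0\in H$. The chain therefore stabilizes at $n$. We do not even need the essential-uniqueness part of finite presentability. We will double-check that the conjugation step is correct, since this is where care is required. If it proves delicate, a fallback is to use the full essential uniqueness of factorizations, as in Example~\ref{exa:2.4}.
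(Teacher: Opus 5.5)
Your proof is correct, and it differs from the paper's mainly in the direction (2)$\Rightarrow$(1).

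\emph{Sufficiency.} You present $G/H$ as the coequalizer of the two maps $\coprod_{j\le k}G\rightrightarrows G$ sending $e$ to $e$ and to $h_j$. Your computation of the classes as the right cosets $Hg$ is right, so $G/H$ is a finite colimit of representables and hence finitely presentable. This mirrors the coequalizer trick of Theorem~\ref{thm:3.1} and yields an explicit finite presentation: one generator $x$ with relations $xh_j=x$. The price is that it relies on the cited characterization of finitely presentable presheaves as finite colimits of representables. The paper instead checks the definition directly. It identifies equivariant maps $G/H\to C$ with elements of $C$ fixed by $H$. It then shows that fixedness under the finitely many generators $h_p$ already holds at some stage of a directed diagram, and it verifies essential uniqueness by hand. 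That route is longer but self-contained.

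\emph{Necessity.} You use the same test object $G/H=\mathrm{colim}\,G/H_n$ as the paper, but a different endgame. The paper passes to a later stage $i$ at which $H_j$ and $H_jg_0$ are merged. This gives $g_0\in H_i$, and then $H\subseteq H_i$ via the composite $q_{ji}\cdot f$. You stay at the factorization stage $n$ and combine $g_0\in H$ with $H\subseteq g_0^{-1}H_ng_0\subseteq H$. Your conjugation step is sound as written: since $g_0\in H$, conjugation by $g_0$ fixes $H$, so $H_n=g_0Hg_0^{-1}=H$. The hedging about replacing $s$, or falling back on essential uniqueness, can simply be deleted. Neither your argument nor the paper's uses essential uniqueness in this direction. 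Both use only the existence of the factorization and the elementwise description of directed colimits in $\Set$.
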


\begin{proof}
(1) Necessity.  Let $G$ fail to be Noetherian, i.e., $G$ has an infinite ascending chain of subgroups $H_i$ ($i < \omega$). Then we prove that there is a finitely generated $G$-set that is not finitely presentable. Namely:
$$G/H\, \text{ where } \, H = \bigcup_{i < \omega} H_i\,.$$

This $G$-set is generated by  $H$. Assuming that $G/H$ is finitely presentable, we get a contradiction. Form the $\omega$-chain of canonical quotients
\[
G/H_0 \xrightarrow{\quad q_{0} \quad} G/H_1 \xrightarrow{\quad q_{1} \quad} G/H_2 \longrightarrow \cdots
\]
where $q_{i}(H_i g) = H_{i+1} g$. It is easy to see that its colimit cocone is formed by the canonical quotients $\bar{q}_i : G/H_i \to G/H$. Since $G/H$ is finitely presentable, $\id : G/H \to \mathrm{colim}_{i < \omega} G/H_i$ factorizes essentially uniquely through $\bar{q}_j$ for some $j < \omega$: we have
$$f\colon G/H \to G/H_j\; \text{  with }\, \mathrm{id}=\bar{q}_j \cdot f.$$
Let $g_0 \in G$ be the element with
$$f(H) = H_j g_0.$$
 Then the colimit map $\bar{q}_j$ merges $H_j$ and $H_j g_0$:
\[
\bar{q}_j(H_j g_0) = \bar{q}_jf(H)= H = \bar{q}_j(H_j).
\]
Thus, for some $i \ge j$ the connecting map
$q_{ji} : G/H_j \to G/H_i$ given by $H_j g \mapsto H_i g$ merges $H_j$ and $H_j g_0$:
\[
H_i = H_i g_0.
\]
We conclude that $\bar{q}_i : G/H_i \to G/H$ is an isomorphism, inverse to $q_{ij}\cdot f$. That is, $H = H_i$. This is the desired contradiction. Indeed, we have:
\[
\bar{q}_i \cdot (q_{ji} \cdot f) = \bar{q}_j \cdot f = \mathrm{id}.
\]
To see that $(q_{ji} \cdot f) \cdot \bar{q}_i = \mathrm{id}$, it is sufficient to verify $q_{ji}\cdot f\cdot \bar{q}_i(H_i)=H_i$:
\[
q_{ji} \cdot f \cdot \bar{q}_i(H_i) = q_{ji} \cdot f(H) = q_{ji}(H_j g_0) = H_i g_0 = H_i.
\]

\medskip
(2) Sufficiency. If $G$ is Noetherian, we prove that $G/H$ is finitely presentable for every subgroup $H$. Then Lemma \ref{lem:4.4} implies that all finitely generated $G$-sets are finitely presentable.

 For every $G$-set $C$, to give an equivariant map $f : G/H \to C$ means to give an element
 $$x \in C\; \text{  with } \;x h = x\, \text{ for all }h \in H.$$
Indeed, given such an $x$, define $f$ by $f(Hg)=xg$ for all $g\in G$.  Conversely, given an equivariant map $f : G/H \to C$, then $x = f(H)$ fulfills
\[
x h = f(H) h = f(Hh) = f(H) = x.
\]

Let $D$ be a directed diagram of $G$-sets with a colimit cocone $c_i\colon D_i\to C\; (i\in I)$:
\[\begin{tikzcd}
	& {D_j} & {D_i} \\
	{G/H} & C
	\arrow["{D(j\to i)}", from=1-2, to=1-3]
	\arrow["{c_j}", from=1-2, to=2-2]
	\arrow["{c_i}", from=1-3, to=2-2]
	\arrow["{\bar{f}}", from=2-1, to=1-2]
	\arrow["f"', from=2-1, to=2-2]
\end{tikzcd}\qquad (i<j\,\text{ in }\,I)\]
For every equivariant map $f : G/H \to C$ we prove that $f$ factorizes, essentially uniquely, through some $c_j$.

Every Noetherian group has all subgroups finitely generated. Let $h_0, \dots, h_{n-1}$ be generators of $H$. Put
$$y = f(H) \in C.$$
Then $f(Hg) = y g$ for all $Hg \in G/H$.

The element $y$ has the form $y = c_j(x)$ for some $j \in I$ and $x \in D_j$. Then $c_j$ merges $x h_p$ with $x$:
\[
c_j(x h_p) = c_j(x) h_p = y h_p = y = c_j(x) \quad (p < n).
\]
Since $D$ is directed, there exists $i \ge j$ such that $D(j \to i)$  also merges $x h_p$ and $x$. That is, for
$$\bar{x} = D(j \to i)(x)$$
we have:
\[
\bar{x} h_p = D(j \to i)(x h_p) = D(j \to i)(x) = \bar{x} \; \; \text{ for all $p<n$.}
\]
Since $H$ is generated by $h_0,\dots, h_{n-1}$, this implies
$$\bar{x} h = \bar{x}\; \text{ for all } \;h \in H.$$

The corresponding equivariant map
$$\bar{\bar{f}}: G/H \to D_i, \; \bar{\bar{f}}(Hg) = \bar{x} g$$
gives the desired factorization:
$
f(Hg) = c_i \cdot \bar{\bar{f}}(Hg),
$
for all $g\in G$. Indeed, from $c_j=c_i\cdot D(i\to j)$ we get
$c_i \cdot \bar{\bar{f}}(Hg) = c_i(\bar{x} g) = c_j(x g) = y g = f(Hg)$.

Finally, $\bar{\bar{f}}$ is essentially unique: if $\tilde{f} : G/H \to D_i$ fulfills $c_i \cdot \tilde{f} = f$, there exists $i' \ge i$ such that $D(i \to i') \cdot \bar{\bar{f}} = D(i \to i') \cdot \tilde{f}$. Indeed, put $\tilde{y} = \tilde{f}(H)$, then $c_i(\tilde{y}) = f(H) = c_i(y)$. Thus there is $i' \ge i$ such that $y$ and $\tilde{y}$ are merged by $D(i\to i')$. This implies the desired equality.
\end{proof}

\begin{example}\label{exa:4.5}
\begin{enumerate}[(1)]
\item For an abelian group $G$, presheaves fulfil fg=fp iff $G$ is finitely presentable (=Noetherian). Indeed, every \fg abelian group $G$ is Noetherian. This follows from the fact that $G$ is a finite coproduct of groups $\mathbb{Z}$ or $\mathbb{Z}/n\mathbb{Z}$, each of which is \fpp Thus all subgroups of $G$ are \fpp
\item
{\color{black}
A concrete example of a presheaf on a group that is finitely generated but not finitely presentable is provided by the quotient of the free group $F_2$ on generators $a$ and $b$ (Example~\ref{exa:2.11}(3)) modulo its commutator $[F_2,F_2]$.
This quotient is the group $\mathbb{Z} \times \mathbb{Z}$, viewed as the following presheaf:
\[
\mathbb{Z} \times \mathbb{Z} = F_2 / [F_2,F_2].
\]
 To see that $\mathbb{Z} \times \mathbb{Z}$ is not a finitely presentable $F_2$-set,  we use the fact that $[F_2,F_2]$ is the free group on the set
$$a^{-n}b^{-m}a^nb^m \quad (n,m\in \mathbb{Z}-\{0\})$$
as seen in Example~\ref{exa:2.11}(3).

Express the set of generators of $[F_2, F_2]$ as a union of an $\omega$-chain of finite subsets, and let $H_k$ be the free group on the $k$-th subset. Then
\[
[F_2, F_2] = \bigcup_{k < \omega} H_k.
\]
Thus, $\mathbb{Z} \times \mathbb{Z}$ is the colimit of the corresponding $\omega$-chain $F_2/H_k$ of $F_2$-sets.
Arguing as in the proof of Theorem \ref{thm:4.5}, this proves that $\mathbb{Z} \times \mathbb{Z}$ is not a finitely presentable $F_2$-set: $\mathrm{id}_{\mathbb{Z} \times \mathbb{Z}}$ does not factorize essentially uniquely through any colimit map.
}
\end{enumerate}
\end{example}

\begin{theorem}\label{thm:4.7}
The category $G$-$\Set$ is DCC iff $G$ has no infinite chain of subgroups.
\end{theorem}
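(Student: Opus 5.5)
Necessity is the quick direction. If $G$-$\Set$ is DCC, then $G$ is Noetherian: DCC gives that subobjects of finitely presentable objects are finitely presentable, and the $G$-set $G$ is representable, hence finitely presentable. Its quotients are therefore finitely presentable, because every finitely generated $G$-set is a quotient of a finitely presentable one, and so fg=fp by Proposition~\ref{pro:fg=fp}. Theorem~\ref{thm:4.5} then gives that $G$ is Noetherian, i.e.\ there is no infinite ascending chain of subgroups. Moreover, by Lemma~\ref{lem:4.3} the poset of quotients of $G$ is isomorphic to the poset of subgroups. With Convention~\ref{con:con1}, a descending chain of quotients $q_0\ge q_1\ge\cdots$ corresponds to a descending chain $H_0\supseteq H_1\supseteq\cdots$ of subgroups, so DCC also forbids infinite descending chains of subgroups. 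Since an infinite chain in a poset contains an infinite ascending or an infinite descending chain (Ramsey), $G$ has no infinite chain of subgroups.

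For sufficiency, assume $G$ has no infinite chain of subgroups. In particular $G$ is Noetherian, so fg=fp by Theorem~\ref{thm:4.5}. By Proposition~\ref{pro:fg=fp}, finitely presentable $G$-sets are closed under subobjects and quotients, which settles the finite presentability parts of a.\ and b. It remains to exclude infinite descending chains. By Lemma~\ref{lem:4.4}, every finitely presentable $X$ is a finite coproduct $\coprod_{i<k} G/H_i$.

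Subobjects are the easy case. In $G$-$\Set$ a subobject of a coproduct is a coproduct of subobjects, and each $G/H_i$ is a transitive (orbit) $G$-set, so its only subobjects are $\emptyset$ and itself. Hence $X$ has only $2^k$ subobjects and there are no infinite chains.

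Quotients are the main obstacle: a quotient of $\coprod_{i<k} G/H_i$ can merge orbits, and it is not simply a tuple of quotients. I would use the fact that, by Lemma~\ref{lem:4.4}, every quotient $X\twoheadrightarrow Q$ is again a coproduct of orbits. Along a descending chain $X\twoheadrightarrow Q_0\twoheadrightarrow Q_1\twoheadrightarrow\cdots$, the induced partition of $\{0,\dots,k-1\}$ (which orbits of $X$ land in the same orbit of $Q_n$) can only coarsen. There are finitely many partitions, so it stabilizes from some $n_0$ on. From then on, each connecting map $Q_n\to Q_{n+1}$ is a coproduct, over the same finite index set, of quotients between orbits $G/K_{n}\twoheadrightarrow G/K_{n+1}$. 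After conjugating the stabilizers consistently, by choosing a base point in each orbit as the image of a fixed generator, these correspond via Lemma~\ref{lem:4.3} to a chain $K_{n_0}\subseteq K_{n_0+1}\subseteq\cdots$ of subgroups. This chain must be finite, so each orbit component eventually becomes an isomorphism. Since there are finitely many components, the whole chain stabilizes. The delicate point is checking that the stabilizers of the chosen base points really form a chain. This follows because the image of a fixed point of $K_n$ is fixed by $K_n$.
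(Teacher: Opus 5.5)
The necessity half is essentially the paper's: fg=fp plus Theorem~\ref{thm:4.5} exclude ascending chains of subgroups, and Lemma~\ref{lem:4.3} excludes descending ones. Note that fg=fp comes from the quotient clause of DCC, not the subobject clause you cite first. Your subobject count in the sufficiency half also matches the paper, and the Ramsey remark is a welcome detail.

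\textbf{The gap is in the quotient part of sufficiency: your chains run in the wrong direction.} Under Convention~\ref{con:con1}, a chain $X\twoheadrightarrow Q_0\twoheadrightarrow Q_1\twoheadrightarrow\cdots$ of successive coarsenings satisfies $q_0\le q_1\le\cdots$, so it is an \emph{ascending} chain of quotients. DCC instead requires descending chains $q_0\ge q_1\ge\cdots$ to be finite, and there the connecting maps go $f_n\colon Q_{n+1}\twoheadrightarrow Q_n$. You used the correct convention yourself in the necessity part. A telling symptom: the sufficiency argument never uses the absence of infinite \emph{descending} chains of subgroups, only Noetherianity. It would therefore apply verbatim to $G=\mathbb{Z}$, whose $G$-sets are not DCC (Example~\ref{exa:Tarski}(1)). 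What you have actually proved is an ascending chain condition for quotients.

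The repair is a direct dualization of your own argument.
\begin{itemize}
\item Take $q_0\ge q_1\ge\cdots$ with $q_n=f_n\cdot q_{n+1}$. If $q_{n+1}$ sends the orbits $G/H_i$ and $G/H_j$ into one orbit, then so does $q_n$. Hence the induced partition of $\{0,\dots,k-1\}$ gets finer, and it again stabilizes from some $n_0$ on.
\item In each block fix a generator $x$ (a coset $H_i$) and put $y_n=q_n(x)$. Then $f_n(y_{n+1})=y_n$, so the stabilizer of $y_{n+1}$ is contained in that of $y_n$.
\item The stabilizers therefore form a descending chain $K_{n_0}\supseteq K_{n_0+1}\supseteq\cdots$, which is finite by hypothesis.
\item When $K_{n+1}=K_n$, the component $G/K_{n+1}\to G/K_n$ is an isomorphism. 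With finitely many blocks, the whole chain stabilizes.
\end{itemize}
With this change the proof is complete. Your partition-stabilization step then makes explicit the reduction to single orbits that the paper only sketches. The paper's version rests on the remark that if $r$ merges a point of $G/H_i$ with one of $G/H_j$, then $r[G/H_i]=r[G/H_j]$.
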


\begin{proof}
If the category $G$-$\Set$ is DCC, then $G$ has no descending infinite chain of subgroups by Lemma \ref{lem:4.3}, and no ascending one  by Theorem \ref{thm:4.5} and Proposition~\ref{pro:fg=fp}.

Conversely, let $G$ have no infinite chain of subgroups. Then by Lemma  \ref{lem:4.4}  we need to prove that each $G$-set $\displaystyle{\coprod_{i=0}^{n-1}G/H_i}$  has only finite chains of subobjects or quotients.

\begin{enumerate}[(a)]
    \item Subobjects. We first verify that, given a subgroup $H \subseteq G$, then $G/H$ has no non-trivial subobjects. Indeed, given a nonempty subobject $S \subseteq G/H$, it is all of $G/H$: choose $H g_0 \in S$. For each $g \in G$, we have:
    $
    H g = H g_0 (g_0^{-1} g) \in S$.
    Thus, the coproduct above has precisely $2^n$ subobjects: every subobject is a coproduct $\displaystyle{\coprod_{i=0}^{n-1}K_i}$ with $K_i$ either $\emptyset$ or $G/H_i$.

    \item Quotients. Given a quotient $r : \coprod_{i=0}^{k-1} G/H_i \to Y$, we observe that, whenever $r$ merges elements $H_i g_0$ and $H_j g_0'$ for some $i \ne j$ and $g_0, g_0' \in G$, then $r[G/H_i] = r[G/H_j]$. Indeed, given $H_i g \in G/H_i$, we have:
    \[
    r(H_i g) = r(H_i g_0 g_0^{-1} g) = r(H_j g_0') g_0^{-1} g \in r[G/H_j].
    \]
    Thus, we just need to prove that for every subgroup $H$, the quotients of $G/H$ do not form infinite descending chains. This follows from Lemma  \ref{lem:4.3}: descending chains of quotients of $G/H$ correspond to descending chains of subgroups of $H$. \qedhere
\end{enumerate}
\end{proof}

{\color{black}
\begin{example}\label{exa:Tarski} \begin{enumerate}[(1)]
\item  Presheaves on the group $\mathbb{Z}$ of integers are not DCC: consider the chain of subgroups $k\mathbb{Z}$ for $k=2,4,8,\dots$.
Consequently, presheaves on an abelian group $G$ are DCC iff $G$ is finite.
Indeed such a group is a finite coproduct of groups $\mathbb{Z}/n\mathbb{Z}$.

\item Infinite groups with DCC categories of presheaves are rare.  But they exist; for example, the \emph{Tarski monster groups}, which are  infinite groups such that, for a given prime $p$, every proper subgroup has $p$ elements. The existence of such a group for every prime $p>10^{75}$ is proved in \cite{Olshanskii1980}.
    \end{enumerate}
\end{example}
}

\begin{theorem}\label{thm:4.9}
For a group $G$, the category of $G$-sets is graduated iff a longest finite chain of subgroups exists.
\end{theorem}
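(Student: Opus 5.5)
Write $\ell(G)$ for the supremum of lengths of chains of subgroups of $G$. The statement says that $G$-$\Set$ is graduated iff $\ell(G)<\infty$. I will prove the two directions separately, using Lemma~\ref{lem:4.3}, Lemma~\ref{lem:4.4}, Theorem~\ref{thm:4.5} and the proof of Theorem~\ref{thm:4.7}.

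\emph{Necessity.} Suppose $G$-$\Set$ is graduated, and let $k$ be the grade of the representable $G$-set $G$. Take any chain of subgroups $H_0\subsetneq H_1\subsetneq\dots\subsetneq H_n$. By Lemma~\ref{lem:4.3} this gives a chain of proper strong quotients
\[
G\twoheadrightarrow G/H_0\twoheadrightarrow G/H_1\twoheadrightarrow\dots\twoheadrightarrow G/H_n .
\]
Each step is a proper quotient of a finitely presentable object. The composite quotients of $G$ are pairwise distinct, so each $G/H_{i+1}$ is a proper quotient of $G/H_i$. Hence the grades strictly decrease along the chain, and $n$ is at most about $k$. The degenerate case $H_0=\{e\}$, where $G/H_0\cong G$, only shifts this by one. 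So chain lengths are bounded by $k+1$, and since grades are natural numbers a longest chain exists.

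\emph{Sufficiency.} Let $\ell=\ell(G)<\infty$. Then there are no infinite ascending chains of subgroups, so $G$ is Noetherian and fg=fp by Theorem~\ref{thm:4.5}. Proposition~\ref{pro:fg=fp} then gives that subobjects and quotients of finitely presentable $G$-sets are finitely presentable. By Lemma~\ref{lem:4.4}, every finitely presentable $X$ is $\coprod_{i<n}G/H_i$, with $n$ the number of orbits and each $H_i$ determined up to conjugacy. I define
\[
\operatorname{grade} X=\sum_{i<n}\bigl(\ell+2-\lambda(H_i)\bigr),
\]
where $\lambda(H)$ is the length of a longest chain of subgroups from $\{e\}$ up to $H$. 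This is well defined because $\lambda$ is invariant under conjugation. Each summand is at least $2$, and it is strictly decreasing in $H$ under proper inclusion: if $H\subsetneq H'$ then $\lambda(H)<\lambda(H')$.

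It remains to check that grades drop strictly.
\begin{itemize}
\item \textbf{Subobjects.} By part (a) of the proof of Theorem~\ref{thm:4.7}, a proper subobject of $X$ is a coproduct of a proper subset of the orbits. It therefore drops at least one positive summand.
\item \textbf{Quotients.} Let $r\colon X\twoheadrightarrow Y$ be a quotient. Each orbit $G/H_i$ maps onto an orbit of $Y$, which is $G/K$ with $K\supseteq gH_ig^{-1}$ for some $g$. So $Y$ is a coproduct indexed by a partition of the orbits of $X$ (part (b) of that proof). An orbit $G/K$ of $Y$ receives a nonempty block $B$ of orbits of $X$, and $\lambda(K)\ge\lambda(H_i)$ for every $i\in B$. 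Its summand is therefore at most the summand of any single $i\in B$, and strictly smaller unless $K$ is conjugate to $H_i$ and $|B|=1$, because all summands are positive. If every block is a singleton with $K\sim H_i$, then each component map $G/H_i\to G/K$ is a surjection between isomorphic transitive $G$-sets. It is an isomorphism, since the stabilizers are conjugate with $gH_ig^{-1}\subseteq K$ and $\lambda$ then forces equality. Hence $r$ is not proper.
\end{itemize}

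The step I expect to be the main obstacle is the quotient case. There I must verify carefully that a surjective equivariant map $G/H\to G/K$ between orbits forces $K$ to contain a conjugate of $H$, and that the conjugate-equality case yields an isomorphism. This is where the finite bound $\ell$, via $\lambda$, is needed: for infinite groups a conjugate of a subgroup could a priori be properly contained in itself, and the finite maximal chain length excludes that.
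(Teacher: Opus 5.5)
Your proof is correct and follows the paper's argument. For necessity you use Lemma~\ref{lem:4.3} and the strictly decreasing grades along $G \twoheadrightarrow G/H_0 \twoheadrightarrow G/H_1 \twoheadrightarrow \cdots$; for sufficiency you grade $\coprod_{i<n} G/H_i$ as a sum of a conjugation-invariant chain-length function of the stabilizers that decreases strictly as the subgroup grows, then check that proper subobjects drop an orbit and proper quotients merge orbits or enlarge stabilizers. The differences are only cosmetic: you count chains upward from $\{e\}$ and shift so every summand is at least $2$, which makes explicit the positivity of summands (needed, e.g., for $G/G$ in $1+1 \twoheadrightarrow 1$). The paper's ``longest chain starting with $H$'' leaves that positivity to the counting convention for chain length.
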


\begin{proof}
Necessity follows from Lemma \ref{lem:4.3}: if the $G$-set $G$ has grade $n$, then no chain of proper subgroups has length larger than $n$.

To prove sufficiency, let $m$ be the maximum length of a chain of subgroups of $G$. We first define the grade of $G/H$ for a subgroup $H$ of $G$. Observe that $G/H$ has no nontrivial subobjects in $\Set^{G^{\op}}$, see item (a) of the proof of Theorem \ref{thm:4.7}.
 Quotients of $G/H$ are precisely all $G/K$ for a subgroup $K\subseteq G$ containing $H$. We define
\[
\text{grade}\, G/H=\text{length of a  longest ascending chain of subgroups of $G$ starting with $H$.}
\]
This is at most $m$.

For a  finitely presentable $G$-set $X=\coprod_{i<n} G/H_i$ (see Lemma \ref{lem:4.4})  we put
\[
\text{grade} \, X = \sum_{i<n} \text{grade}\,  G/H_i.
\]

Subobjects of finitely presentable presheaves are finitely presentable, since they are essentially coproducts of quotients of $G$ (see Theorem \ref{thm:4.7}). Every proper subobject $Y$ of $X$ has a smaller grade. Indeed, $Y=\coprod_{i<n}Y_i$ where $Y_i$ is a subobject of $G/H_i$, and for some $i$ then $Y_i$ is the initial presheaf (of values $\emptyset$).

 We just need to prove that if $e\colon X\to Y$ is a proper quotient, then $\text{grade}\, Y< \text{grade}\, X$.  We know that $Y$ has the form $Y=\coprod_{j<p}G/K_j$ (Lemma \ref{lem:4.4}). Then, for every $j<p$, there is some $i_j<n$ such that $H_{i_j}\subseteq K_j$. By restriction and corestriction, $e$ gives rise to a quotient
$$G/H_{i_j}\twoheadrightarrow  G/K_j.$$
Since $e$ is a proper quotient, either $p<n$ or $H_{i_j}\subset K_j$ for some $j$.  Thus $\text{grade}\, Y < \text{grade}\, X$.
\end{proof}

\begin{example}\label{exa:Obraztsov}
There exist groups $G$ with arbitrarily long finite chains of subgroups, but with no infinite chain, see Appendix. Then, $\Set^{G^{\op}}$ is DCC but not graduated.
\end{example}

Recall that a groupoid is a category $\mathcal{G}$ with invertible morphisms. Every object $a$ defines the automorphism group $\mathcal{G}(a,a)$.

\begin{corollary}\label{cor:4.10} The category of preasheaves on a small groupoid satisfies
\begin{enumerate}
\item fg=fp iff every automorphism group is Noetherian.
\item DCC iff every automorphism group has only finite chains of subgroups;
\item Graduatedness iff every automorphism group has a finite longest chain of subgroups.
\end{enumerate}
\end{corollary}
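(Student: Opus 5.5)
The plan is to reduce the groupoid case to the group case, using Remark~\ref{rem:2.1}(6). A small groupoid $\mathcal{G}$ is equivalent to its skeleton. Choose one object $a_k$ in each connected component ($k\in K$). The skeleton is then the coproduct (disjoint union) of the one-object groupoids $G_k=\mathcal{G}(a_k,a_k)$. Moreover, isomorphic objects have isomorphic automorphism groups (conjugation by an isomorphism), so the hypotheses about ``every automorphism group'' only involve the groups $G_k$. For a coproduct of categories, presheaves decompose as a product:
\[
\Set^{\mathcal{G}^{\op}}\simeq \prod_{k\in K}\Set^{G_k^{\op}}=\prod_{k\in K}G_k\text{-}\Set .
\]

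The key step is to describe the finitely generated, finitely presentable, sub- and quotient objects in this product. A presheaf $X$ corresponds to a family $(X_k)$. Since a generator lives in a single component (Remark~\ref{rem:2.1}(2)), $X$ is finitely generated iff all $X_k$ are finitely generated and almost all are empty. Similarly, by the Proposition characterizing finitely presentable presheaves as finite colimits of representables, $X$ is finitely presentable iff almost all $X_k$ are empty and the remaining ones are finitely presentable $G_k$-sets. The argument is that a finite colimit of representables only involves finitely many components and is computed componentwise; conversely, a finite family of finitely presentable $X_k$ (each extended by $\emptyset$ elsewhere) has a finite coproduct that is finitely presentable. Subobjects and quotients of $X$ are exactly families of subobjects and quotients of the $X_k$; quotients of $\emptyset$ are only $\emptyset$.

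Then the three items follow from the group results.

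\textbf{fg=fp.} This holds for $\mathcal{G}$ iff it holds for each $G_k$-$\Set$. Necessity comes from extending a counterexample in $G_k$-$\Set$ by empty sets. Sufficiency comes from applying fg=fp in each nonempty component. Theorem~\ref{thm:4.5} then gives that each $G_k$ is Noetherian.

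\textbf{DCC.} Chains of subobjects or quotients of a finitely presentable $X$ are chains in a finite product of posets of subobjects or quotients of the $X_k$. A finite product of posets without infinite descending chains has none either: this is Dickson-style, since an infinite strictly descending chain in a finite product must strictly descend infinitely often in some coordinate. Closure of finitely presentable objects under subobjects and strong quotients holds componentwise. Theorem~\ref{thm:4.7} applies to each $G_k$; conversely, DCC of the product restricts to each component.

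\textbf{Graduatedness.} Suppose each $G_k$-$\Set$ is graduated (Theorem~\ref{thm:4.9}). Define $\operatorname{grade} X=\sum_k \operatorname{grade} X_k$, with $\operatorname{grade}\emptyset=0$ as in the proof of Theorem~\ref{thm:4.9}. A proper subobject or proper quotient is proper in some component and weakly smaller in the others, so the sum strictly decreases. Conversely, grades restricted to a single component show that each $G_k$-$\Set$ is graduated.

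The only mildly delicate point is the characterization of finitely presentable objects in the infinite product. This is handled by noting that a representable lies in one component and finite colimits touch only finitely many components. A cleaner alternative avoids the product: check directly that homs out of $X$ commute with filtered colimits componentwise. The other steps are routine.
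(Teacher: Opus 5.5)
Your proposal is correct and follows the same route as the paper. You split $\mathcal{G}$ into connected components, note that a finitely presentable presheaf lives on only finitely many of them, and use Remark~\ref{rem:2.1}(6) to replace each component by its automorphism group, so that Theorems~\ref{thm:4.5}, \ref{thm:4.7} and \ref{thm:4.9} apply. Your skeleton/product formulation also spells out how each property transfers across the infinite product: the pigeonhole argument for descending chains and the summed grade with $\operatorname{grade}\emptyset=0$. The paper leaves this step implicit in ``it is enough to prove the corollary for connected groupoids''.
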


\begin{proof}

Every groupoid $\mathcal{G}$ is a coproduct of its components, the maximum connected subgroupoids:
\[\mathcal{G}=\coprod_{i\in I} \mathcal{G}_i\, .
\]
Every  finitely presentable presheaf \(X\)  on \(\mathcal{G}\) has a finite generating set. It follows that its restriction to all but finitely many summands,
\(
X_i = X/{\mathcal{G}_i},
\)
is constant with value \(\emptyset\).

Consequently, it is enough to prove the corollary for connected groupoids \(\mathcal{G}\). For them this follows from Remark \ref{rem:2.1}, Lemma \ref{lem:4.3},  Theorems \ref{thm:4.5}, \ref{thm:4.7} and  \ref{thm:4.9}, because \(\mathcal{G}\) is a category equivalent to the group
\(
G = \mathcal{G}(a,a)
\)
for any object \(a \in \mathcal{G}\)
(thus,
\(
\mathbf{Set}^{\mathcal{G}^{\mathrm{op}}}
\) is equivalent to
\(\mathbf{Set}^{G^{\mathrm{op}}}\) by Remark~\ref{rem:2.1}(6)).
Indeed, for every object \(b \in \mathcal{G}\) there exists, since \(\mathcal{G}\) is connected, a morphism
\(
i_b : b \to a
\).
Consider the functor
\[
\Phi : \mathcal{G} \to G
\]
 assigning to every morphism
\(
f : b \to b'
\) of $\mathcal{G}$
the morphism
\[
\Phi f = a\xrightarrow{i_b^{-1}}b\xrightarrow{f}b'\xrightarrow{i_{b'}}a .
\]
This functor is fully faithful, and hence an equivalence.
\end{proof}

\begin{example}
Let \(\mathcal{B}\) be the groupoid of finite sets and bijections. The presheaf category
\(
\mathbf{Set}^{\mathcal{B}^{\mathrm{op}}}
\)
is graduated. Indeed, the automorphism groups are finite.
\end{example}

\section{Presheaves on a cartesian category}

From Section 3 we know that if sieves are finitely generated in a cartesian category $\cata$, then fg = fp holds in $\Set^{\cata^{\op}}$. We now prove that  the sufficient   condition for graduatedness (every object carries a finite number of sieves) turns out to be also necessary. In fact, the following nicer condition is necessary and sufficient: every collection of morphisms $t_n \colon a_n \to a$ ($n < \omega$) in $\mathcal{A}$ factorizes through some of a finite number of their members.

\begin{theorem}\label{thm:*5.2} The following conditions on a small category $\cata$ with binary products are equivalent:
\begin{enumerate}[(1)]
    \item $fg = fp$: Every finitely generated presheaf is finitely presentable.
    \item For every collection of morphisms $t_n \colon a_n \to a$ ($n < \omega$) in $\mathcal{A}$ there exists $k$ such that each $t_n$ factorizes through $t_i$ for some $i \leq k$.
    \item All sieves in $\mathcal{A}$ are finitely generated.
    \item All bi-sieves in $\mathcal{A}$ are finitely generated.
\end{enumerate}
\end{theorem}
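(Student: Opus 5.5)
We will prove the cycle (4) $\Rightarrow$ (1) $\Rightarrow$ (3) $\Rightarrow$ (4) and, separately, (2) $\Leftrightarrow$ (3). The implication (4) $\Rightarrow$ (1) is Theorem~\ref{thm:3.1}. The implication (1) $\Rightarrow$ (3) is the right-hand implication of Corollary~\ref{cor:*3.2}, which comes from Proposition~\ref{pro:fg=fp}: a sieve on $a$ is a subobject of the finitely presentable presheaf $\cata(-,a)$, hence finitely presentable, hence finitely generated. For (3) $\Rightarrow$ (4) we use Example~\ref{exa:2.6}(1). A bi-sieve $S$ on $(a,a')$ corresponds to the sieve $\{\langle f,f'\rangle \mid (f,f')\in S\}$ on $a\times a'$, and this correspondence respects precomposition, since $\langle f,f'\rangle\cdot u=\langle fu,f'u\rangle$. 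So a finite generating set of the sieve transports back to a finite generating set of $S$.

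For (3) $\Rightarrow$ (2), take morphisms $t_n\colon a_n\to a$ ($n<\omega$) and let $S$ be the sieve generated by all of them, i.e.\ all composites $t_n\cdot u$. By (3), $S$ is generated by finitely many members $t_{n_0}u_0,\dots,t_{n_r}u_r$. Put $k=\max n_j$. Every $t_n$ then has the form $t_{n_j}u_jv$, so it factorizes through $t_{n_j}$ with $n_j\le k$.

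For (2) $\Rightarrow$ (3) we argue by contradiction using Lemma~\ref{lem:2.9}, which lets us replace ``finitely generated'' by the absence of infinite ascending chains. Suppose some sieve $S$ on $a$ is not finitely generated. Following the sufficiency part of the proof of Lemma~\ref{lem:2.9}, we choose $t_0\in S$ and then, recursively, $t_{n+1}\in S$ outside the sieve generated by $t_0,\dots,t_n$. This gives a sequence in which no $t_n$ factorizes through any $t_i$ with $i<n$. Condition (2) supplies a $k$, and applying it to $n=k+1$ shows that $t_{k+1}$ factorizes through some $t_i$ with $i\le k$, a contradiction. Note that this direction does not use binary products; they enter only in (3) $\Rightarrow$ (4).

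No step looks like a serious obstacle. The point needing the most care is the bijection between bi-sieves and sieves on $a\times a'$ together with the preservation of generating sets, which requires checking that precomposition commutes with pairing. The other point to state precisely is the recursive choice of the $t_n$ in (2) $\Rightarrow$ (3): the hypothesis that $S$ is not finitely generated is what guarantees that each step is possible.
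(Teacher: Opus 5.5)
Your proof is correct and takes essentially the paper's route: the equivalence of (1), (3) and (4) is exactly what the paper obtains from Corollary~\ref{cor:*3.3}, via Theorem~\ref{thm:3.1}, Proposition~\ref{pro:fg=fp} and Example~\ref{exa:2.6}(1). Your arguments for (2)$\Leftrightarrow$(3) match the paper's, except that you work directly with finite generation, whereas the paper passes through ascending chains of sieves and Lemma~\ref{lem:2.9}; you are also right that binary products are not needed for (2)$\Leftrightarrow$(3).
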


\begin{proof} We prove the equivalence of (2) and (3), the rest follows from Corollary~\ref{cor:*3.3}.

\medskip

$(2) \Rightarrow (3)$: By Lemma~\ref{lem:2.9} we need to prove that every ascending chain of sieves $S_n$ on an object $a$ is finite. Assuming the contrary, we choose $t_n \in S_{n+1} \setminus S_n$ for every $n < \omega$, and verify that this collection of morphisms fails to satisfy (2). Indeed, for any $i \le k$, $t_{k+1}$ does not factor through $t_i$. This holds because $t_i \in S_{i+1} \subseteq S_{k+1}$, whereas $t_{k+1} \notin S_{k+1}$.

\medskip

$(3) \Rightarrow (2)$: Let $t_n \colon a_n \to a$ ($n < \omega$) be given. Denote by $S_n$ the sieve of all morphisms that factorize through $t_i$ for some $i \leq n$. Then $S_n \subseteq S_{n+1}$. By Lemma~\ref{lem:2.9} there exists $k$ such that $S_k$ is the largest of all these sieves $S_n$. Then $k$ has the desired property: every morphism $t_n$ lies in $S_k$, thus $t_n$ factorizes through $t_i$ for some $i \leq k$.
\end{proof}

\begin{theorem}\label{thm:new5.1}
For every category $\cata$ with binary products, the following conditions are equivalent:
\begin{enumerate}[(1)]
    \item $\Set^{\cata^{\op}}$ is graduated.
    \item $\Set^{\cata^{\op}}$ is DCC.
    \item Every object of $\cata$ carries finitely many sieves.
\end{enumerate}
\end{theorem}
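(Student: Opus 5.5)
We prove the cycle of implications $(3)\Rightarrow(1)\Rightarrow(2)\Rightarrow(3)$.

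\emph{$(3)\Rightarrow(1)$.} Since $\cata$ has binary products, Example~\ref{exa:2.6}(1) identifies the bi-sieves on $(a,a')$ with the sieves on $a\times a'$. Hence (3) gives finitely many bi-sieves on every pair of objects, and Theorem~\ref{thm:3.4} shows that $\Set^{\cata^{\op}}$ is graduated. This is also Corollary~\ref{cor:3.13}.

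\emph{$(1)\Rightarrow(2)$.} Let $X$ be finitely presentable of grade $n$. By the definition of graduatedness, every subobject and every strong quotient of $X$ is finitely presentable. In a strictly descending chain of subobjects, each member is a proper subobject of the previous one, so the grades strictly decrease. The same holds for strictly descending chains of quotients, and in presheaf categories all quotients are strong by Remark~\ref{rem:2.1}(5). Every such chain therefore has length at most $n+1$, so both conditions a.\ and b.\ of the DCC definition hold.

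\emph{$(2)\Rightarrow(3)$.} This is the main step, and the binary products are used essentially here. By Corollary~\ref{cor:*3.6} (right-hand implication), DCC implies that no object carries an infinite chain of sieves. We must upgrade this to finitely many sieves.

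Suppose $a$ carries infinitely many sieves. By Lemma~\ref{lem:2.9} every sieve is finitely generated, so every sieve on $a$ is a union of finitely many principal sieves $\langle t\rangle$ with $t\colon b\to a$. Principal sieves are subobjects of $\cata(-,a)$, so they have no infinite chains. Hence, if only finitely many principal sieves existed, there would be only finitely many sieves. So there are infinitely many distinct principal sieves. Take the poset $P$ of principal sieves on $a$ ordered by inclusion. Since it has no infinite chains, Ramsey's theorem yields an infinite antichain $\langle t_n\rangle$, $t_n\colon b_n\to a$.

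To convert this antichain into a chain I would first try the unions $S_k=\langle t_0\rangle\cup\dots\cup\langle t_k\rangle$. These form an ascending chain, and they are strictly increasing because $t_{k+1}\notin S_k$. A principal sieve is contained in a union of sieves iff its generator lies in one of them, and $\langle t_{k+1}\rangle\not\subseteq\langle t_i\rangle$ for $i\le k$ by the antichain property. This gives an infinite ascending chain of sieves, contradicting Lemma~\ref{lem:2.9}.

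The step I expect to need the most care is exactly this conversion of ``infinitely many'' into ``an infinite chain''. One must check that any poset with no infinite chain whose elements are finite joins of finitely many types is finite. The argument above avoids this by working directly with generators: once there are infinitely many principal sieves, Ramsey's theorem yields either an infinite chain, which is excluded, or an infinite antichain, which produces the strictly ascending unions. The binary-product hypothesis enters only through $(3)\Rightarrow(1)$.
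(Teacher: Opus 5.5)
Your proof is correct. For $(3)\Rightarrow(1)$ it agrees with the paper, but for $(2)\Rightarrow(3)$ it takes a genuinely different route.

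The paper argues inside the lattice $L$ of sieves on $a$, using only distributivity and the absence of infinite \emph{descending} chains. It writes sieves as finite joins of join-irreducibles and then tries to bound $|L|$ by $2^n$ from a decomposition of the top sieve.

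You instead use both chain conditions from Corollary~\ref{cor:*3.6}. The ascending one gives finite generation (Lemma~\ref{lem:2.9}), so every sieve is a finite union of principal sieves; the nonempty principal sieves are exactly the join-irreducible elements of $L$. Ramsey's dichotomy, together with your observation that an antichain of generators $t_n$ yields the strictly increasing unions $S_k$, then finishes the proof.

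Your route has two advantages. First, it shows that the ascending chain condition is indispensable. Distributivity and the descending chain condition alone cannot force finiteness: the lattice $\omega+1$ of down-sets of $\omega$ is complete, distributive, has no infinite descending chain, and is infinite. Relatedly, steps 2 and 3 of the paper's argument do not work as written. The top sieve $\cata/a$ is generated by $\id_a$, so it is itself join-irreducible, and the argument would then give $|L|\le 2$. That is false already for $a=1$ in the poset $0<1$, which has three sieves. Your counting through principal sieves is what actually closes the argument.

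Second, you never use binary products in this step. So you prove $(2)\Rightarrow(3)$ for every small category, and in fact you show that ``no infinite chains of sieves on $a$'' already means ``finitely many sieves on $a$'', which sharpens Corollary~\ref{cor:*3.6}. Your closing remark is therefore right. The earlier sentence claiming that products are used essentially in $(2)\Rightarrow(3)$ is wrong and should be deleted.

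One small slip in $(1)\Rightarrow(2)$. Under Convention~\ref{con:con1}, in a strictly descending chain of quotients $e_0>e_1>\cdots$ of $X$, each codomain $Q_k$ is a proper quotient of $Q_{k+1}$. So the grades increase along the chain rather than decrease. Finiteness still follows, because every quotient of $X$ has grade at most that of $X$, so the chain has length at most $\mathrm{grade}\,X+1$.
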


\begin{proof}
$3 \implies 1$ follows from Corollary~\ref{cor:*3.9}, and $1 \implies 2$ is clear. Let us prove $2 \implies 3$.

Given an object $a$, let $L$ be the poset of all sieves on it, ordered by inclusion. This is a complete, distributive lattice with meets given by intersection. This lattice has no infinite descending chains because it is isomorphic to the lattice of subobjects of $\cata(-,a)$ in $\Set^{\cata^{\op}}$. We prove that $L$ is finite.

\begin{enumerate}
    \item Every element $S$ of $L$ is the join of finitely many atoms (join-irreducible elements) of $L$. Indeed, assuming the contrary, we have a sieve $S$ which is not such a join. Thus $S$ is not an atom:
    \[
    S = S_0 \vee S_1 \quad \text{for some $S_i \subset S$ ($i = 0, 1$).}
     \]
     Moreover, $S_0$ or $S_1$ is also not a join of finitely many atoms, say, it is $S_0$. Thus we have
    \[
    S_0 = S_{00} \vee S_{01}
    \quad \text{for some $S_{0i}\subset S_0$.}\]
   We can assume that $S_{00}$ is not a join of finitely many atoms, etc. We obtain an infinite descending chain
    \[
    S \supset S_0 \supset S_{00} \supset S_{000} \supset \dots,
    \]
    a contradiction.

    \item The join in 1. is unique. Indeed, given $S = S_0 \vee S_1 \vee \dots \vee S_{n-1}$ for atoms $S_i$, then every atom $\overline{S} \subseteq S$ is equal to $S_i$ for some $i$. This follows from $\overline{S} \wedge S_i = \emptyset$ or $S_i$, and
    \[
    \overline{S} = \overline{S} \wedge S = \bigvee_{i=0}^{n-1} (\overline{S} \wedge S_i).
    \]

    \item $L$ has finitely many atoms. Indeed, the largest presheaf $\cata/a$ is a join $S_0 \vee \dots \vee S_{n-1}$ of atoms. Due to distributivity, every sieve on $a$ 3is a join of a subset of $\{S_i\}_{i < n}$, thus,
    \(
    \text{card} L \leq 2^n.
    \)
\end{enumerate}
\end{proof}

\begin{example}
\begin{enumerate}
    \item $\Set^{\mathcal{F}}$ is graduated (see Example \ref{exa:*3.4}(1)). Also, $\Set^{\mathcal{F}^{\op}}$ is graduated: a sieve in $\mathcal{F}$ on an $n$-element set $a$ is determined by the set of all images of members of $\mathcal{F}$, a subset of $\mathcal{P}a$. Thus, there are at most $2^{2^n}$ sieves.

    \item The poset $\omega^{\op}$ has infinitely many sieves on $0$: each down-set $\down n$ in $\omega^{\op}$ is a sieve. However, all of them are finitely generated. Thus, the category $$\Set^{\omega}$$ of $\omega$-chains of sets satisfies fg=fp, but it is not graduated.
\end{enumerate}
\end{example}

\section{Presheaves on Posets}

The results on $\Set^{\cata^{\op}}$ for posets $\cata$  are completely analogous to those for cartesian categories. However, we have not found a unified proof for these two cases.

Observe that the hom-functor $\cata(-,a)$ is, essentially, the subposet $\down a$ of all elements majorized by $a$.

\begin{theorem}\label{thm:6.1}
The following conditions on a poset $\cata$ are equivalent:
\begin{enumerate}[(1)]
    \item $fg=fp$: finitely generated presheaves are finitely presentable in $\Set^{\cata^{\op}}$.
    \item Every ascending chain or antichain with an upper bound in $\cata$ is finite.
    \item All sieves in  $\cata$ are finitely generated.
    \item All bi-sieves in  $\cata$ are finitely generated.
\end{enumerate}
\end{theorem}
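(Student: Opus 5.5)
By Corollary~\ref{cor:3.8}, (1) and (3) are equivalent for posets. (4) $\Rightarrow$ (1) is Theorem~\ref{thm:3.1}. (3) $\Rightarrow$ (4) is the observation after Corollary~\ref{cor:3.8}: a bi-sieve on $(a,a')$ is a down-closed subset of $\down a \cap \down a'$, hence a sieve on $a$, and generators of that sieve also generate it as a bi-sieve. So the remaining work is the purely order-theoretic equivalence (2) $\Leftrightarrow$ (3). Recall that a sieve on $a$ is a down-closed subset $S \subseteq \down a$. A finite set $S_0$ generates $S$ iff $S = \bigcup_{s \in S_0} \down s$, and by Lemma~\ref{lem:2.9} finite generation of all sieves on $a$ is equivalent to the absence of infinite ascending chains of sieves on $a$.

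For (3) $\Rightarrow$ (2), take an infinite ascending chain $x_0 < x_1 < \dots$ or an infinite antichain $\{x_n\}$, all bounded above by some $a$. The sieve $S = \bigcup_n \down x_n$ on $a$ is generated by a finite set $S_0 \subseteq S$. Each member of $S_0$ lies below some $x_n$, so there is $N$ with $S \subseteq \bigcup_{n \le N} \down x_n$. In the chain case, $x_{N+1} \le x_n$ for some $n \le N$, which is impossible. In the antichain case, $x_{N+1} \le x_n$ with $n \neq N+1$ contradicts incomparability.

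For (2) $\Rightarrow$ (3), suppose a sieve $S$ on $a$ is not finitely generated. As in the proof of Lemma~\ref{lem:2.9}, we can pick $s_0, s_1, s_2, \dots \in S$ with $s_n \notin \down s_0 \cup \dots \cup \down s_{n-1}$. So $s_n \not\le s_i$ whenever $i < n$, and all $s_n$ are bounded by $a$. Now apply Ramsey's theorem for pairs to the sequence, colouring a pair $i<n$ by whether $s_i < s_n$ or $s_i, s_n$ are incomparable. These are the only options, since $s_n \le s_i$ and $s_n = s_i$ are excluded. We obtain an infinite subsequence that is either a strictly ascending chain or an antichain, still bounded above by $a$, which contradicts (2).

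The main obstacle is this last step: extracting a bounded infinite ascending chain or antichain from a non-finitely-generated sieve. The key is to choose the witnesses so that the "descending" relation is ruled out in advance; after that, the infinite Ramsey theorem, or equivalently the standard fact that a poset is well-quasi-ordered in the relevant sense, does the rest. If I want to avoid Ramsey, I would instead argue directly. Consider the set of elements $s_n$ that are below infinitely many later $s_m$. Either we can repeatedly pass to such elements and build an ascending chain, or some infinite tail has each element below only finitely many others, and from that tail we greedily thin out an antichain.
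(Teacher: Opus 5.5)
Your proof is correct, but it is organized differently from the paper's.

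\textbf{The step (1)$\Rightarrow$(2).} The paper runs the cycle (1)$\Rightarrow$(2)$\Rightarrow$(3)$\Rightarrow$(4)$\Rightarrow$(1), and its step (1)$\Rightarrow$(2) is a direct construction. From an infinite ascending chain or antichain $\{b_n\}$ below $a$, it builds a presheaf $F$ generated by $0,1\in Fa$. It then exhibits $F$ as the colimit of an $\omega$-chain of quotients $F_n$ and shows that $\mathrm{id}_F$ factors through no $F_n$.

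You instead import (1)$\Leftrightarrow$(3) from Corollary~\ref{cor:3.8} and reduce the theorem to the order-theoretic equivalence (2)$\Leftrightarrow$(3). This includes a two-line proof of (3)$\Rightarrow$(2) via the sieve $\bigcup_n\down x_n$, which the paper only obtains by going around the cycle.

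\textbf{Why the two routes are closer than they look.} The paper's $F$ is exactly the amalgam $\cata(-,a)+_S\cata(-,a)$ over your sieve $S=\bigcup_n\down b_n$. Likewise $F_n$ is the amalgam over $\bigcup_{k<n}\down b_k$. So the paper is re-running, in this special case, the argument of Proposition~\ref{pro:fg=fp}, which reaches you through Corollary~\ref{cor:3.8}.

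\textbf{What each approach buys.} The paper's version produces an explicit finitely generated presheaf that is not finitely presentable, generalizing Example~\ref{exa:2.4}. Yours is shorter and isolates the real content: all sieves on $a$ are finitely generated iff $(\down a)^{\op}$ is well-quasi-ordered.

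\textbf{The step (2)$\Rightarrow$(3).} Both proofs extract a sequence with $s_n\not\le s_i$ for $i<n$. Your appeal to Ramsey's theorem is a clean substitute for the paper's iterated dichotomy.

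\textbf{Your Ramsey-free sketch.} As worded, the dichotomy you propose is not exhaustive. Take pairwise incomparable $p_k$ and pairwise incomparable $q_j$ with $p_k<q_j$ iff $j>k$, listed as $p_0,q_0,p_1,q_1,\dots$; this sequence satisfies $s_n\not\le s_i$ for $i<n$. Every $p_k$ lies below infinitely many later terms, but the terms above it (the $q_j$) lie below none, so you cannot pass from one such element to the next. Moreover, every tail contains such $p_k$, so the tail alternative fails too. To repair it, you can do either of the following:
\begin{itemize}
\item relativize, i.e.\ continue inside the infinite set of later terms above the chosen element, and thin the antichain out of such an infinite subset rather than a tail;
\item use the paper's dichotomy, which asks whether every term of a tail lies below \emph{some} later term.
\end{itemize}
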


\begin{proof}
$(1) \implies (2)$: Let $\{b_n\}_{n \in \omega}$ be an infinite subset forming an ascending chain or antichain with an upper bound $a$. We show that the presheaf $F$ given by
\begin{equation*}
    F c = \begin{cases}
        \{0, 1\} & \text{if } c \leq a \text{ and $c\not\leq b_k$ for all $k<\omega$}\\
        \{0\} & \text{if } c \leq b_k \text{ for some $k<n$} \\
        \emptyset & \text{otherwise}
    \end{cases}
\end{equation*}
is not finitely presentable. This is the desired contradiction: $F$ is generated by $0,\,1 \in Fa$.

 We clearly have that
\begin{equation}\label{eq:star6}
m < n  \, \text{ implies }\, b_n \not\leq b_m.
\end{equation}

Let $F_n$ be the following presheaf ($n < \omega$):
\begin{equation*}
    F_n c = \begin{cases}
        \{0, 1\} & \text{if } c \le a \text{ and } c \not\leq b_k \text{ for all } k < n \\
        \{0\} & \text{if } c \le b_k \text{ for some } k < n \\
        \emptyset & \text{otherwise}
    \end{cases}
\end{equation*}
It follows from \eqref{eq:star6} that these presheaves form an $\omega$-chain
\[
F_0 \xrightarrow{\epsilon_0} F_1 \xrightarrow{\epsilon_1} F_2 \xrightarrow{\epsilon_3} \dots
\]
of quotients of $F_0$. Moreover, the cocone of quotients $\bar{\epsilon}_n: F_n \to F$ is a colimit of this chain.

If $F$ were finitely presentable,  the identity map $\id_F: F \to \text{colim}_{n < \omega} F_n$ would factorize through  $\bar{\epsilon}_n$ for some $n$:
\[
\begin{tikzcd}
    F \arrow[r, "\mathrm{id}"] \arrow[dr, dashed, "\psi"'] & F \\
    & F_n \arrow[u, "\bar{\epsilon}_n"']
\end{tikzcd}
\]
Clearly, $\psi_a = \mathrm{id}$. We obtain a contradiction: For the morphism $b_n \to a$, the naturality square does not commute:
\[
\begin{tikzcd}
    F a \arrow[r, "\psi_a"] \arrow[d, "F(b_n \to a)"'] & F_n a \arrow[d, "F_n(b_n \to a)"] \\
    F b_n \arrow[r, "\psi_{b_n}"] & F_n b_n
\end{tikzcd}
\quad =\quad
\begin{tikzcd}
    \{0,1\} \arrow[r, "\mathrm{id}"] \arrow[d] &  \{0,1\} \arrow[d, "\mathrm{id}"] \\
    \{0\} \arrow[r] & \{0,1\}
\end{tikzcd}
\]

\medskip
$(2) \implies (3)$: By (2), no infinite antichain or ascending chain with an upper bound exists in $\cata$.

We show that, then, for every $a\in \cata$, no infinite ascending chain of sieves on $a$ exists. Hence, by  Lemma \ref{lem:2.9} we conclude that all sieves of $\cata$ are finitely generated. Indeed, assume that, on the contrary, we have an ascending chain of sieves on $a$:
$$S_0\subset S_1\subset S_2\subset \dots$$
Let $b_0\in S_0$. Then there is some $b_1\in S_1-S_0$ such that $b_1\not\leq b_0$. Analogously, there is some $b_2$ with $b_2\not\leq b_1$, and so on. That is, we find a set of elements $b_i$, $i<\omega$, majorized by $a$, such that $b_n\not\leq b_m$ for $m<n$. If for every $i<\omega$ there is $j\in \omega$ with $i<j$ and $b_i<b_j$, then we get   an infinite ascending chain. Otherwise, there is some $i_0$ such that for all $j>i_0$ $b_{i_0}\not\leq b_j$; analogously, there is some $i_1\geq i_0+1$ such that for all $j\geq i_1$ we have $b_{i_1}\not\leq b_j$; and so on. This way we get an infinite antichain. This is the desired contradiction.

\medskip
$(3) \implies (4)$: Given elements $a$ and $a'$, every bi-sieve on them is also a sieve on $a$.

\medskip
$(4) \implies (1)$: This is Theorem \ref{thm:3.1}.
\end{proof}

\begin{example}\label{exa:6.2}
For every  ordinal $\alpha$ (the poset of all ordinals  $i < \alpha$) the category of $\alpha$-chains of sets (which are presheaves on $\alpha^{\op}$) fulfils $fg=fp$ because $\alpha^{\op}$  contains no infinite ascending chain or antichain.

In contrast, the category of $\alpha^{\op}$-chains of sets does not fulfil $fg=fp$, unless  $\alpha\leq \omega$. See Example~\ref{exa:2.4}.
\end{example}

\begin{theorem}\label{thm:6.3}
The following conditions on a poset $\cata$ are equivalent:
\begin{enumerate}[(1)]
    \item $\Set^{\cata^{\op}}$ is DCC;
    \item $\Set^{\cata^{\op}}$ is graduated;
    \item Every subset of $\cata$ with an upper bound is finite.
\end{enumerate}
\end{theorem}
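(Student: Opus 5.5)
The plan is to prove $(3)\Rightarrow(2)\Rightarrow(1)\Rightarrow(3)$. Condition (3) says exactly that every down-set $\down a$ is finite, because a subset is bounded by $a$ precisely when it lies in $\down a$.

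For $(3)\Rightarrow(2)$ I would invoke Theorem~\ref{thm:3.4}, so it suffices to show that every pair $(a,a')$ carries only finitely many bi-sieves. As observed after Corollary~\ref{cor:3.8}, a bi-sieve on $(a,a')$ in a poset is a down-closed subset of $\down a\cap\down a'$. Since $\down a$ is finite by (3), there are at most $2^{|\down a|}$ such subsets, and Theorem~\ref{thm:3.4} then gives graduatedness. The implication $(2)\Rightarrow(1)$ is immediate from the definitions: if grades strictly decrease along proper subobjects and proper quotients, then no infinite strictly descending chain of either kind can exist.

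For $(1)\Rightarrow(3)$, assume the category is DCC and suppose some $\down a$ is infinite; I will derive a contradiction. By the definition of DCC, every subobject of $\cata(-,a)$ is finitely presentable, so $fg=fp$ holds by Proposition~\ref{pro:fg=fp}. Alternatively, ascending chains of sieves on $a$ must terminate, since otherwise the union would be a non-finitely-generated subobject. Theorem~\ref{thm:6.1} then tells us that every antichain and every ascending chain bounded by $a$ is finite. An infinite poset $\down a$ with no infinite antichain contains an infinite chain, by Ramsey's theorem (or Dilworth-type arguments). Since there are no infinite ascending chains, it must contain an infinite strictly descending chain $b_0>b_1>b_2>\cdots$, all below $a$. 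The sieves $\down b_n$ on $a$ then form an infinite strictly descending chain of subobjects of $\cata(-,a)$, which contradicts DCC condition (a). Hence $\down a$ is finite.

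The main obstacle is the combinatorial step in $(1)\Rightarrow(3)$: extracting either an infinite antichain, an infinite ascending chain or an infinite descending chain from the infinite set $\down a$. I would handle it with the infinite Ramsey theorem, colouring each pair $i<j$ of an enumeration according to whether the two elements are incomparable, satisfy $b_i<b_j$, or satisfy $b_i>b_j$. An infinite homogeneous set then gives one of the three configurations, and each is excluded: the first two by Theorem~\ref{thm:6.1} together with $fg=fp$, and the third by descending chains of sieves.
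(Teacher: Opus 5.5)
Your proof is correct. The implications $(3)\Rightarrow(2)\Rightarrow(1)$ are exactly as in the paper, but your $(1)\Rightarrow(3)$ takes a genuinely different route.

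The paper does not use Theorem~\ref{thm:6.1}. It notes that $\down a-\{a\}$, as a subobject of $\cata(-,a)$, is finitely presentable and hence generated by finitely many $b_0,\dots,b_{n-1}<a$. Iterating this below each $b_i$ yields a finitely branching tree $T$ whose directed paths are strictly descending sequences. DCC, applied to the sieves $\down x_n$, forbids infinite paths, so $T$ is finite by a K\"onig-type argument. Finally, every $x\le a$ is shown to be a vertex of $T$.

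You instead extract $fg=fp$ from DCC and then import Theorem~\ref{thm:6.1} to exclude bounded infinite antichains and ascending chains. The infinite Ramsey theorem then forces a bounded infinite descending chain, which contradicts the descending chain condition for subobjects of $\cata(-,a)$.

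Your version makes the underlying trichotomy transparent: for posets, DCC amounts to $fg=fp$ together with the absence of bounded infinite descending chains. The cost is reliance on Ramsey's theorem and on the non-finitely-presentable presheaf built in Theorem~\ref{thm:6.1}. The paper's argument is self-contained, uses only the subobject half of DCC, and exhibits $\down a$ explicitly as a finite tree of generators.

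Two small remarks. First, Proposition~\ref{pro:fg=fp} requires closure of finitely presentable presheaves under both subobjects and quotients, not merely finite presentability of subobjects of $\cata(-,a)$. DCC does provide both, since all quotients of presheaves are strong (Remark~\ref{rem:2.1}(5)). Second, your ``alternatively'' remark already lets you bypass Theorem~\ref{thm:6.1}. An infinite antichain or ascending sequence $b_0,b_1,\dots$ in $\down a$ gives the strictly ascending chain of sieves generated by $\{b_0,\dots,b_k\}$. Its union would be a non-finitely-generated subobject of $\cata(-,a)$, which DCC forbids.
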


\begin{proof}
$(3) \implies (2)$. This follows from Theorem \ref{thm:3.4}: given $a, a'\in \cata$, the number of bi-sieves on them is finite because $\down a$ is finite.

\medskip
$(2) \implies (1)$. This is trivial.

\medskip
$(1) \implies (3)$. For every element $a$ we prove that the down-set $\down a$ is finite (which is equivalent to (3)).  Since DCC implies that subobjects of finitely presentable presheaves are finitely presentable, the presheaf
$$\down a  - \{a\}$$
of all elements $b < a$ is \fpc thus, finitely generated.
Let $b_0,\dots,b_{n-1}$ be  generating elements: we have $b_i<a$ and
\[
\down a  = \bigcup_{i=0}^{n-1} \down b_i \cup \{a\}.
\]
We define a directed graph $T$  on the set $\down a$ of vertices. The neighbors of $a$ are $b_i$:
\[
\begin{tikzcd}[
    arrow style=tikz,
    /tikz/blacktriangle/.style={-{Triangle[length=3.5pt, width=3.5pt, fill=black]}}
]
	& a & \\
	{b_0} & {b_1} & {b_{n-1}}
	\arrow[blacktriangle, from=1-2, to=2-1]
	\arrow[""{name=0, anchor=center, inner sep=0}, blacktriangle, from=1-2, to=2-2]
	\arrow[""{name=1, anchor=center, inner sep=0}, blacktriangle, from=1-2, to=2-3]
	\arrow["\dots"'{pos=0.7}, draw=none, from=0, to=1]
\end{tikzcd}
\]
For each $i<n$, we continue analogously: we find generating elements $c_0, \, \dots,c_{k-1}$ of $\down b_i-\{b_i\}$, and they are the neighbors of  $b_i$ in $T$, etc. This yields a directed graph with no infinite directed path: if $x_0, \, x_1, \, x_2, \dots$ were a path in $T$, then $x_0> x_1> x_2>\dots$ and $\down x_n$ ($n<\omega$) would form an infinite descending chain of sieves, in contradiction to the DCC property. Consequently, the creation of $T$ stops after finitely many steps. We conclude that $T$ is a finite graph.

It remains to prove that every element $x \le a$ is a vertex of $T$.
Assuming the contrary, we present an infinite path $y_m$ in $T$.
Put $y_0 = a$. Since $x$ does not lie in $T$, we have $x < a$, therefore, there exists $i < n$ such that $x \le b_i$ for some $i < n$; put $y_1 = b_i$. Analogously, we have $x < b_i$, therefore there exists $j < k$ with $x \le c_j$; put $y_2 = c_j$, etc. Thus the vertices of $T$ are precisely the elements of $\down a$. This concludes the proof that $\down a$ is finite.
\end{proof}

\begin{example}
Let $\alpha$ be an ordinal. Then $\Set^{\alpha}$  is DCC iff $\alpha$ is finite. The same is true for $\Set^{\alpha^{\op}}$.
\end{example}

We have mentioned in the Introduction that DCC categories have the property that finitary functors preserving finite intersections have a simplified construction of terminal coalgebras. We now present the details of this result from \cite{AMM2}, and give an example which demonstrates that the DCC condition is substantial for it.
For the following, see \cite{A} or \cite{AMM1}.

\begin{remark}\textbf{The terminal-coalgebra chain}.
Given an endofunctor $F: \mathcal{K} \to \mathcal{K} $ where $\mathcal{K}$ is complete, a \emph{coalgebra} is an object $A$ of $\mathcal{K} $ together with a morphism $a: A \to FA$. The category of coalgebras has morphisms from $(A,a)$ to $(A',a')$ given by $f: A \to A'$ in $\mathcal{K}$ such that the following square commutes:
\[\begin{tikzcd}
	A & FA \\
	{A'} & {FA'}
	\arrow["a", from=1-1, to=1-2]
	\arrow["f"', from=1-1, to=2-1]
	\arrow["Ff", from=1-2, to=2-2]
	\arrow["{a'}"', from=2-1, to=2-2]
\end{tikzcd}\]
The \emph{terminal coalgebra}, denoted by
$$\nu F \xrightarrow{\psi} F(\nu F)$$
 is the terminal object of the resulting category.
\end{remark}

The following transfinite chain $V_i$ ($i \in \text{Ord}^{\op}$) has the property that whenever a connecting map $v_{i+1,i}: V_{i+1} \to V_i$ is invertible, then $\nu F = V_i$ with $\psi = v_{i,i+1}^{-1}$, as proved in \cite{A}:

\begin{itemize}
    \item {Initial step:} $V_0$ is the terminal object of $\mathcal{K}$.
    \item {Isolated step:} $V_{i+1} = F V_i$.
    \item {Limit step:} $V_i = \lim_{j < i} V_j$.
\end{itemize}

The connecting morphisms $v_{ij}: V_i \to V_j$ ($i > j$) are obvious for $j=0$; we have
\[
v_{i+1, j+1} = F v_{ij}: V_{i+1} \to V_{j+1},
\]
and for limit ordinals $j$, all the maps $v_{ji}: V_j \to V_i$ ($j > i$) form the limit cone.

\begin{definition}
The terminal-coalgebra  chain \emph{converges} in $i$ steps if $v_{i+1, i}: F V_i \to V_i$ is invertible.
\end{definition}

\begin{proposition} (\cite{A})
If the terminal coalgebra chain converges in $i$ steps, then $V_i$ is the terminal coalgebra with structure map $v_{i+1,i}^{-1}\colon V_i\to FV_i$.
\end{proposition}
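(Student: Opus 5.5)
We follow the classical argument of \cite{A}: for an arbitrary coalgebra we build a canonical cone into the whole chain, and then read off both existence and uniqueness of the coalgebra morphism into $V_i$. Throughout we write $v=v_{i+1,i}\colon FV_i\to V_i$, which is invertible by hypothesis, and we regard $(V_i, v^{-1})$ as a coalgebra.

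\emph{Step 1: the canonical cone.} Let $a\colon A\to FA$ be a coalgebra. By transfinite recursion we define morphisms $a_j\colon A\to V_j$ for all ordinals $j\le i+1$:
\begin{itemize}
\item $a_0$ is the unique morphism into the terminal object $V_0$;
\item $a_{j+1}=Fa_j\cdot a$;
\item for a limit ordinal $j$, $a_j$ is the unique morphism with $v_{jk}\cdot a_j=a_k$ for all $k<j$.
\end{itemize}
The limit step needs the family $(a_k)_{k<j}$ to be a cone, so we prove simultaneously, by induction on $j$, that $v_{jk}\cdot a_j=a_k$ for all $k<j$. The cases $k=0$ follow from terminality. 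For the case of two successors, use $v_{j+1,k+1}=Fv_{jk}$ together with the induction hypothesis:
\[
v_{j+1,k+1}\cdot a_{j+1}=F(v_{jk}\cdot a_j)\cdot a=Fa_k\cdot a=a_{k+1}.
\]
When $k$ is a limit ordinal, compose further with the limit projections $v_{kl}$, $l<k$, which are jointly monic. Limit values of $j$ hold by construction.

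\emph{Step 2: existence.} We have $a_{i+1}=Fa_i\cdot a$ and $v\cdot a_{i+1}=a_i$ by Step 1. Hence
\[
v^{-1}\cdot a_i=Fa_i\cdot a,
\]
which says exactly that $a_i\colon(A,a)\to(V_i,v^{-1})$ is a coalgebra morphism.

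\emph{Step 3: uniqueness.} Let $h\colon A\to V_i$ satisfy $v^{-1}\cdot h=Fh\cdot a$. We prove by induction on $j\le i$ that $v_{ij}\cdot h=a_j$; the case $j=i$ then gives $h=a_i$.
\begin{itemize}
\item For $j=0$ this is terminality.
\item For $j+1\le i$, we have $v_{i,j+1}\cdot v=v_{i+1,j+1}=Fv_{ij}$, so $v_{i,j+1}=Fv_{ij}\cdot v^{-1}$. Therefore
\[
v_{i,j+1}\cdot h=Fv_{ij}\cdot v^{-1}\cdot h=Fv_{ij}\cdot Fh\cdot a=Fa_j\cdot a=a_{j+1}.
\]
\item For a limit ordinal $j\le i$, both $v_{ij}\cdot h$ and $a_j$ agree after composing with every limit projection $v_{jk}$, $k<j$, hence they are equal.
\end{itemize}

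The main obstacle is bookkeeping rather than any single deep step. One must check that the identity $v_{i+1,j+1}=Fv_{ij}$ is available for every $j<i$, including limit $j$ and the case $j+1=i$. One must also check that the recursively defined connecting maps are compatible, so that the limit steps in both inductions are legitimate; if needed, I would first verify separately that $(v_{jk})$ forms a chain, by the same kind of induction.
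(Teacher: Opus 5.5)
Your argument is correct and is the standard one from \cite{A}; the paper does not reproduce a proof, it only cites that source. You build the canonical cone $a_j\colon A\to V_j$, get existence from $v\cdot a_{i+1}=a_i$, and get uniqueness by transfinite induction using $v_{i,j+1}=Fv_{ij}\cdot v^{-1}$. The one fact you defer, the compatibility $v_{jk}\cdot v_{lj}=v_{lk}$ of the connecting maps, is part of what the paper means by calling $(V_j)$ a chain, so nothing essential is missing.
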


\begin{theorem} (\cite{AMM2}, Theorem 5.1)
Let $\mathcal{K}$ be a DCC category. Then for every finitary endofunctor $F: \mathcal{K} \to \mathcal{K}$ preserving finite intersections, the terminal-coalgebra chain converges in $\omega + \omega$ steps.
\end{theorem}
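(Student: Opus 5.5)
The plan follows Worrell's argument for set functors. First show that the chain becomes monic from step $\omega$ on. Then use the DCC to prove that $F$ preserves the intersection of the resulting descending $\omega$-chain of subobjects.

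\emph{Step 1 (monicity).} A functor preserving finite intersections preserves monomorphisms, because $m$ is monic iff $m\cap m=m$, i.e., iff the pullback of $m$ along itself is trivial. Hence it suffices to prove that $v_{\omega+1,\omega}\colon FV_\omega\to V_\omega$ is monic. Then $v_{\omega+n+1,\omega+n}=F^n v_{\omega+1,\omega}$ is monic for all $n$, and $V_{\omega+\omega}=\lim_n V_{\omega+n}$ is the intersection $\bigcap_n V_{\omega+n}$ of a descending chain of subobjects of $V_\omega$.

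For the monicity of $v_{\omega+1,\omega}$ I would test against finitely presentable objects $A$, which form a strong generator of $\mathcal{K}$. Take $p,q\colon A\to FV_\omega$ with $v_{\omega+1,\omega}p=v_{\omega+1,\omega}q$. Since $F$ is finitary, $p$ and $q$ factor through $FB$ for some finitely presentable $B\to V_\omega$. Replace $B$ by its image in $V_\omega$; this is a strong quotient of $B$, so it is finitely presentable by the DCC. The components $B\to V_n$ then give compatible data on each finite level, and one argues as in Worrell's proof that $p=q$. I regard this step as the main technical obstacle; I expect to adapt the $\Set$ argument using the DCC on subobjects of the finitely presentable $B$.

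\emph{Step 2 (convergence at $\omega+\omega$).} We must show that $v_{\omega+\omega+1,\omega+\omega}\colon FV_{\omega+\omega}\to V_{\omega+\omega}$ is invertible. Equivalently, $F$ preserves the intersection $\bigcap_n V_{\omega+n}$: then
\[FV_{\omega+\omega}=\bigcap_n FV_{\omega+n}=\bigcap_n V_{\omega+n+1}=V_{\omega+\omega}.\]
As $FV_{\omega+\omega}\le\bigcap_n FV_{\omega+n}$ trivially, it suffices to show that every morphism $f\colon A\to\bigcap_n FV_{\omega+n}$ with $A$ finitely presentable factors through $FV_{\omega+\omega}$, again using that these $A$ form a strong generator.

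Since $F$ is finitary and $V_\omega$ is a directed union of its finitely presentable subobjects (images of finitely presentable objects, finitely presentable by the DCC), $f$ viewed as a map into $FV_\omega$ factors through $Fm$ for some subobject $m\colon B\rightarrowtail V_\omega$ with $B$ finitely presentable. The subobjects $B\cap V_{\omega+n}$ form a descending chain of subobjects of $B$. By the DCC this chain is eventually constant, say from $n_0$ on, and therefore $B\cap V_{\omega+n_0}\le V_{\omega+\omega}$. Now $f$ factors through both $FB$ and $FV_{\omega+n_0}$. Since $F$ preserves finite intersections, it factors through $F(B\cap V_{\omega+n_0})\le FV_{\omega+\omega}$, which is what we need.

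Finally, the proposition of \cite{A} quoted above gives that $V_{\omega+\omega}$ is the terminal coalgebra.
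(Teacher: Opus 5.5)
The paper does not prove this theorem; it only quotes it from \cite{AMM2}. So I assess your outline on its own terms. Step 2 is correct once the chain consists of monomorphisms from $\omega$ on. You pass to images to get a finitely presentable $B\rightarrowtail V_\omega$, stabilize $B\cap V_{\omega+n}$ by the DCC on subobjects of $B$, and use preservation of finite intersections; together these correctly show that $F$ preserves $\bigcap_n V_{\omega+n}$.

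The genuine gap is Step 1. You defer it to ``as in Worrell's proof'', but this is where the real content lies. Moreover, the tool you name for it, the DCC on \emph{subobjects} of $B$, is not the one that works. Worrell's point in $\Set$ is that a finite subset of $V_\omega$ is already separated at some finite stage. The analogue here is that $v_{\omega,n}\cdot m$ is monic for some $n$, and proving this needs the DCC on \emph{strong quotients} (part b of the definition).

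Here is how to close the gap.
\begin{itemize}
\item Factor $b_n=v_{\omega,n}\cdot m\colon B\to V_n$ as $b_n=m_n\cdot e_n$, with $e_n$ strongly epic and $m_n$ monic.
\item Since $b_n=v_{n+1,n}\cdot b_{n+1}$, diagonal fill-in gives $e_n=d\cdot e_{n+1}$, that is, $e_{n+1}\le e_n$. So the $e_n$ form a descending chain of strong quotients of the finitely presentable object $B$. By the DCC they all represent the same quotient $e=e_{n_0}$ for $n\ge n_0$.
\item The $b_n$ with $n\ge n_0$ then factor through $e$, compatibly because $e$ is epic. Hence $m$ factors through $e$ via the limit cone of $V_\omega$.
\item Since $m$ is monic, $e$ is monic, hence invertible, and so $b_{n_0}$ is monic. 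Consequently $Fb_{n_0}$ is monic.
\item Suppose $v_{\omega+1,\omega}\,p=v_{\omega+1,\omega}\,q$ with $p=Fm\cdot p'$ and $q=Fm\cdot q'$. Using $v_{\omega,n_0+1}\cdot v_{\omega+1,\omega}=Fv_{\omega,n_0}$, you get $Fb_{n_0}\cdot p'=Fb_{n_0}\cdot q'$. Thus $p'=q'$ and $p=q$.
\end{itemize}
With this inserted, your argument is complete. It uses both halves of the DCC: quotients for monicity at $\omega$, and subobjects for convergence at $\omega+\omega$.
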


We now present an example demonstrating that for a locally finitely presentable category not fulfilling the DCC condition, the convergence can take $\alpha$ steps for an arbitrary large ordinal $\alpha$.
The base category is presheaves on $\alpha^{\op}$.

\begin{example}
We work in the category $\Set^\alpha$ of chains $X = (X_i)_{i<\alpha}$ of sets
\[
X\equiv X_0 \to X_1 \to X_2 \to \dots \to X_i \to \dots \quad (i < \alpha)
\]
Morphisms $\varphi: X \to X'$ are the natural transformations $\varphi_i: X_i \to X'_i$ making the following squares commute for all $i \le j < \alpha$:
\[
\begin{tikzcd}
    X_i \arrow[r] \arrow[d, "\varphi_i"'] & X_j \arrow[d, "\varphi_j"] \\
    X'_i \arrow[r] & X'_j
\end{tikzcd}
\]
We denote by $I$ the initial object with all components $\emptyset$.

We define an endofunctor $F$ on $\Set^\alpha$ by
$F I = I$, and for $X \neq I$ we put
\[
(FX)_i = \begin{cases}
    X_i & \text{if there is } j < i \text{ with } X_j \neq \emptyset \\
    \emptyset & \text{otherwise.}
\end{cases}
\]
That is, if $X$ has a prefix consisting of copies of $\emptyset$, then  $FX$ differs from $X$ by changing just the next component to $\emptyset$. Analogously for morphisms: Let $!$ denote the empty map (domain $\emptyset$), then for $\varphi\colon X\to Y$ we put
\[
(F\varphi)_i = \begin{cases}
    \varphi_i & \text{if there is } j < i \text{ with } X_j \neq \emptyset \\
    ! & \text{otherwise.}
\end{cases}
\]
\end{example}

\begin{lemma}
$F$ is a finitary endofunctor whose terminal-coalgebra chain does not converge before $\alpha$ steps.
\end{lemma}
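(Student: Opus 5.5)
The plan is to verify three things: $F$ is a well-defined functor, $F$ is finitary, and the terminal-coalgebra chain can be computed explicitly. The explicit computation will show that every connecting map $v_{i+1,i}$ with $i<\alpha$ fails to be invertible.

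\emph{Functoriality.} For $X\neq I$ I first check that $FX$ is again an $\alpha$-chain. Let $i\le i'$. If $(FX)_i\neq\emptyset$, then some $j<i$ has $X_j\neq\emptyset$, and the same $j$ witnesses the condition at $i'$. So the transition map $(FX)_i\to(FX)_{i'}$ is either the restriction of $X_i\to X_{i'}$ or the empty map. Next take a morphism $\varphi\colon X\to Y$. If $X_j\neq\emptyset$ then $Y_j\neq\emptyset$, because $\varphi_j$ is a map out of a nonempty set. Hence whenever $(F\varphi)_i=\varphi_i$, its codomain $(FY)_i$ equals $Y_i$, and otherwise $(F\varphi)_i$ is the empty map. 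Naturality squares and preservation of identities and composition then hold componentwise. For $X=I$ we have $F\varphi=!$. Equivalently, $FX$ is the subobject of $X$ obtained by emptying the first component that has no nonempty predecessor, and this description makes all the checks routine.

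\emph{Finitarity.} Filtered colimits in $\Set^\alpha$ are computed componentwise. Let $X=\mathrm{colim}_d X^d$ be filtered. Then $X_j\neq\emptyset$ iff $X^d_j\neq\emptyset$ for some $d$, and by filteredness this then holds for all $d$ above that one. Fix a component $i$. If some $j<i$ has $X_j\neq\emptyset$, then on a cofinal part of the diagram $(FX^d)_i=X^d_i$, so $(FX)_i=X_i=\mathrm{colim}_d X^d_i=\mathrm{colim}_d (FX^d)_i$. Otherwise every $X^d_j$ with $j<i$ is empty, and both sides are $\emptyset$. The canonical comparison map is therefore an isomorphism in each component. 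The cases where $X$ or some $X^d$ equals $I$ are included, since all the relevant components are then empty.

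\emph{The chain.} I claim by transfinite induction that, for $i\le\alpha$, $V_i$ has components $(V_i)_k=\emptyset$ for $k<i$ and $(V_i)_k=1$ for $k\ge i$, with the inclusions as connecting maps.
\begin{itemize}
\item For $i=0$ this is the terminal object.
\item For the isolated step, let $i<\alpha$, so $V_i\neq I$ and its first nonempty component is $i$. Then $F$ empties exactly component $i$, which gives $V_{i+1}$.
\item For a limit ordinal $i$, limits are componentwise. At a component $k<i$ the chain of sets is eventually $\emptyset$, so the limit is $\emptyset$. At a component $k\ge i$ every set is $1$, so the limit is $1$.
\end{itemize}
Consequently, for $i<\alpha$ the map $v_{i+1,i}\colon V_{i+1}\to V_i$ has component $\emptyset\to 1$ at $k=i$, so it is not invertible. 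Thus the chain does not converge before $\alpha$ steps. It does converge at $\alpha$, since $V_\alpha=I=FI$.

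I expect the only delicate points to be the finitarity check at components where nonemptiness first appears along the diagram, and handling the special case $FI=I$ consistently in both the functoriality and finitarity arguments.
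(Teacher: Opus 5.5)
Your proof is correct and takes the same route as the paper. Finitarity follows from the componentwise computation of directed colimits. The chain is described explicitly by $(V_k)_i=\emptyset$ for $i<k$ and $1$ otherwise, so for every $i<\alpha$ the map $v_{i+1,i}$ has the non-invertible component $\emptyset\to 1$ at $i$. You additionally verify functoriality and the limit step, which the paper leaves implicit. The paper instead adds the observation that $I$ is the only coalgebra, which the statement does not require.

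One wording fix: $FX$ is obtained from $X$ by emptying its first \emph{nonempty} component. It is not obtained by emptying ``the first component that has no nonempty predecessor'', since that is always component $0$.
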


\begin{proof}
(1) Given a directed diagram $D: \mathcal{D} \to \Set^\alpha$ with a colimit $\gamma^d\colon Dd\to C$ $(d\in \text{obj}\mathcal{D})$, we prove that $\text{colim} \, FD $ is given by $(F\gamma^d)$. Since  colimits in $\Set^\alpha$ are computed object-wise, for every ordinal $i < \alpha$ we have the following equivalence (given any $j<i$):
\[
C_j \neq \emptyset \text{ iff  there exists } d \in \mathcal{D} \text{ with } (Dd)_j \neq \emptyset.
\]
From this, it is easy to see that $F$ preserves the colimit.

\medskip
(2) The functor $F$ has only one coalgebra, namely $\id: I \to FI$. Indeed, given $X \neq I$, there exists  $i < \alpha$ with $X_i \neq \emptyset$ and $(FX)_i = \emptyset$. Thus no morphism leads from $X$ to $FX$. Therefore, $I$ is the terminal coalgebra.

\medskip
(3)
The terminal-coalgebra chain starts as follows:
\begin{align*}
    V_0 : 1 \to 1 \to 1 \to \dots \\
    V_1 : \emptyset \to 1 \to 1 \to \dots \\
    V_2 : \emptyset \to \emptyset \to 1 \to \dots
\end{align*}
In general,

\[
(V_k)_i  = \begin{cases}
    \emptyset & \text{ if  } i < k \\
    1 & \text{otherwise.}
\end{cases}
\]
 It takes $\alpha$ steps to reach $V_\alpha = I$.
\end{proof}

\begin{openproblem}
Can an analogous example be found with $F$ that, moreover, preserves finite intersections?
\end{openproblem}

\appendix

\section{Appendix: Chains of Subgroups}

In Example \ref{exa:Obraztsov} we claim that there  exist groups without infinite chains of subgroups which have arbitrarily long finite chains of subgroups. We have asked  Artificial Intelligence  and obtained an answer that we checked and  present below.

For the quotients $\mathbb{Z}/n$ of the group of integers modulo the subgroup $n\mathbb{Z}$ it is easy to see that $\mathbb{Z}/3^i$ is a subgroup of $\mathbb{Z}/3^{i+1}$ for every $i \in \mathbb{N}$. Thus, $\mathbb{Z}/3^i$ has a chain of subgroups of length $i+1$. No element of $\mathbb{Z}/3^i$ has order 2: if an integer $n$ is not divisible by $3^i$, then neither is $n+n$.

\begin{notation}\label{nota:A.1} Let $G_i$ be groups isomorphic to $\mathbb{Z}/3^i$ $(i\in \mathbb{N})$ such that for $i\not=j$ the group $G_i\cap G_j$ is trivial (the unit).
\end{notation}

 We use these groups in the following result:

\begin{theorem}[\cite{Olshanskii1991}, Theorem 35.1]
Let $G_i$ ($i \in \mathbb{N}$) be nontrivial finite groups without elements of order 2, having pairwise trivial intersections. There exists a group $G$ with the following properties:
\begin{enumerate}[(1)]
    \item $G$ contains   $G_i$ as a subgroup for every $i \in \mathbb{N}$.
    \item Every proper subgroup of $G$ is finite.
    \item $G$ has two generators.
\end{enumerate}
\end{theorem}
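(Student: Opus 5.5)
This is Ol'shanskii's theorem (cited as \cite{Olshanskii1991}, Theorem 35.1), a deep result from the theory of graded small-cancellation presentations. We cannot reprove it from the elementary material of this paper. So I would state its provenance and indicate the architecture of Ol'shanskii's argument rather than attempt a self-contained proof.

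The construction starts from the free product $F = \ast_{i\in\mathbb{N}} G_i$ of the given finite groups; the assumption of pairwise trivial intersections just says we may regard them as distinct free factors. First enumerate the elements of $F$ and pick two of them, $x$ and $y$, which together with the defining relations will generate. Next, inductively add relations $R_1, R_2, \dots$ of increasing rank, producing quotients $F = H_0 \to H_1 \to H_2 \to \cdots$, and take $G$ as the direct limit. The relations are chosen to satisfy Ol'shanskii's graded small-cancellation condition $C(\epsilon,\mu,\lambda,c,\rho)$ relative to the free product. They serve two purposes:
\begin{enumerate}[(a)]
\item each generator of each $G_i$ is expressed as a word in $x, y$, which gives (3);
\item for every pair of elements $u, v$ not conjugate into a single factor, and not generating a cyclic subgroup, a relation is imposed making $\langle u, v\rangle = G$, which gives (2).
\end{enumerate}
The absence of elements of order $2$ is used exactly here. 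Ol'shanskii's technique of making every "non-elementary" pair generate the whole group requires that the factors contain no involutions, so that elementary subgroups (centralizers of periodic elements) are finite and controlled.

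The key step is the main lemma of graded diagrams. If the relations satisfy the small-cancellation condition, then van Kampen diagrams over $H_k$ have contiguity subdiagrams of bounded type. It follows that the natural maps $G_i \to G$ stay injective, which gives (1). It also follows that every proper subgroup of $G$ is conjugate into some $G_i$ or is cyclic of bounded finite order, hence finite. This diagrammatic analysis (Chapters 9--11 and 35 of \cite{Olshanskii1991}) is the genuine obstacle; it occupies a large part of the monograph.

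Accordingly, in the paper I would simply cite \cite{Olshanskii1991}. The only thing to check here is that the groups of Notation~\ref{nota:A.1} satisfy the hypotheses: $\mathbb{Z}/3^i$ is finite and nontrivial for $i\ge 1$, it has no element of order $2$ (as shown above), and the intersections are trivial by choice. Applying the theorem then produces the group $G$ required in Example~\ref{exa:Obraztsov}.
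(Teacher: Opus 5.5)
Your proposal agrees with the paper, which also gives no proof of this statement and simply cites Ol'shanskii's Theorem 35.1; your check that the groups $\mathbb{Z}/3^i$ satisfy the hypotheses (with $i\ge 1$, so that they are nontrivial) is exactly how the Appendix uses it. One caveat on your heuristic sketch: relations of types (a) and (b) alone would leave elements of infinite order, since (b) never acts on a cyclic subgroup, so Ol'shanskii's construction must also impose periodicity relations $A^n=1$ on the period words of each rank, which is where ``cyclic of order dividing $n$'' comes from; likewise, the no-involution hypothesis serves to rule out dihedral subgroups generated by two involutions (such a subgroup is neither cyclic, nor conjugate into a factor, nor all of $G$), rather than to make centralizers finite.
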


Item (2) is formulated in loc. cit in more detail (that we do no employ below): for a sufficiently large number $n\in \mathbb{N}$, every proper subgroup of $G$ is either cyclic of order dividing $n$, or its conjugate to $G_i$ (thus, isomorphis to $G_i$) for some $i\in \mathbb{N}$.

\begin{corollary}
Let $G$ be the group constructed above for $G_i$ in Notation \ref{nota:A.1}. Then $G$ has arbitrarily long finite chains of subgroups. Moreover, it has no infinite chains of subgroups.
\end{corollary}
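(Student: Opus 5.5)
The corollary has two parts, and I would prove them separately using only properties (1) and (2) of the group $G$ from the cited theorem together with the groups $G_i\cong\mathbb{Z}/3^i$ of Notation~\ref{nota:A.1}.

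\emph{Arbitrarily long finite chains.} Since $\mathbb{Z}/3^i$ is cyclic of order $3^i$, it has the chain of subgroups
\[
\{0\}\subset 3^{i-1}\mathbb{Z}/3^i\subset 3^{i-2}\mathbb{Z}/3^i\subset\dots\subset \mathbb{Z}/3^i ,
\]
of length $i+1$; these are the subgroups of orders $1,3,\dots,3^i$. Transport this chain along the isomorphism $G_i\cong\mathbb{Z}/3^i$. By property (1), $G_i$ is a subgroup of $G$, so the transported chain is a chain of subgroups of $G$. Letting $i$ range over $\mathbb{N}$ gives chains of every finite length.

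\emph{No infinite chains.} Let $\mathcal{C}$ be an infinite chain of subgroups of $G$. The goal is to produce an infinite chain of subgroups inside a single finite group, which is impossible.

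If $G\in\mathcal{C}$, remove it. The remaining family $\mathcal{C}'$ is still infinite and consists of proper subgroups, each of which is finite by property (2).

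For each $H\in\mathcal{C}'$, every member of $\mathcal{C}'$ is either contained in $H$ or contains $H$. Only finitely many members are contained in $H$, because they are subgroups of the finite group $H$. Hence infinitely many members of $\mathcal{C}'$ properly contain $H$. So $\mathcal{C}'$ has no largest element, and we can choose a strictly ascending sequence $H_0\subset H_1\subset H_2\subset\cdots$ in $\mathcal{C}'$.

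Put $U=\bigcup_n H_n$. This is a subgroup, since the union of an ascending chain of subgroups is a subgroup. It is infinite, since the orders of the $H_n$ strictly increase. By property (2), an infinite subgroup cannot be proper, so $U=G$.

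Now property (3) gives generators $x,y$ of $G$. Both lie in some single $H_n$, because the $H_n$ form an ascending chain. Then $G=\langle x,y\rangle\subseteq H_n$, so $H_n=G$. This contradicts $H_n$ being a proper subgroup.

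This last step is the main point of the argument. Property (2) alone would permit an infinite ascending union of finite subgroups exhausting $G$, as happens in a Prüfer-type group. It is the finite generation in property (3) that rules this out.
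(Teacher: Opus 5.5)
Your proof is correct and follows the paper's argument. Finiteness of proper subgroups rules out infinite descent, and an infinite ascending sequence would have union $G$, which is impossible because $G$ is generated by two elements. The only difference is organizational: the paper treats descending and ascending $\omega$-chains separately, whereas you handle an arbitrary infinite chain by noting that each member has only finitely many members below it, and then extracting an ascending sequence. Also, contrary to your opening sentence, you do need property (3), and you use it.
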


Indeed, since $A_i$ contains a chain of subgroups of length $i+1$, so does $G$. Every descending chain of subgroups
\(
H_0 \supset H_1 \supset H_2 \supset \dots
\)
is finite because $H_1 (\subset G)$ is finite. Every ascending chain
\(
H_0 \subset H_1 \subset H_2 \subset \dots
\)
is finite because otherwise its union (which is itself a subgroup) is all of $G$. Thus, the two generators of $G$ lie in $H_i$ for some $i$. This means that $H_i = G$.

\bibliographystyle{abbrv}
\bibliography{references}

\medskip

\begin{tabular}{p{14cm}}
Department of Mathematics, Faculty of Electrical Engineering, \\
Czech Technical University in Prague, Czech Republic. \\

Institute for Theoretical Computer Science, Technical University of Braunschweig, Germany.\\
Email: \href{mailto:j.adamek@tu-bs.de}{\texttt{j.adamek@tu-bs.de}}
\end{tabular}

\bigskip

\begin{tabular}{p{14cm}}
University of Coimbra, CMUC, Department of Mathematics, Portugal.\\
Polytechnic University of Viseu, ESTGV, Portugal

Email: \href{mailto:sousa@estv.ipv.pt}{\texttt{sousa@estv.ipv.pt}}
\end{tabular}

\end{document}